\pdfoutput=1
\documentclass[11pt]{amsart}
\usepackage{pb-diagram}
\usepackage{amsmath}
\usepackage{tikz}

\usepackage{pgf}
\def\obrazek#1#2{\pgfdeclareimage[height=#2]{#1}{#1}
\begin{center}\pgfuseimage{#1}\end{center}}

\usetikzlibrary{matrix,arrows,decorations.pathmorphing}

\textwidth = 152mm
\hoffset -0.8cm

\overfullrule0pt

\numberwithin{equation}{section}
\newtheorem{thm}[equation]{Theorem}

\newtheorem{prp}[equation]{Proposition}

\newtheorem{cor}[equation]{Corollary}
\theoremstyle{remark}
\newtheorem{rem}[equation]{Remark}
\newtheorem{exa}[equation]{Example}
\newtheorem{df}[equation]{Definition}

\DeclareMathOperator{\map}{{\bf map}}
\DeclareMathOperator{\en}{{\bf end}}
\def\sec{{\bf sec}}

\def\R{\mathbb{R}}
\def\Z{\mathbb{Z}}
\def\N{\mathbb{N}}
\def\Q{\mathbb{Q}}
\def\F{\mathbb{F}}

\def\fU{\mathfrak{U}}
\def\fF{\mathfrak{F}}
\def\vP{\vec{P}}
\def\vSigma{\vec{\Sigma}}
\def\vR{\vec{\R}}
\def\vOmega{\vec{\Omega}}
\def\vxOmega{\vec{\Omega}\vSigma X}
\def\vixOmega{\vec{\Omega}_{inc}\vSigma X}
\def\vOSX{\vec{\Omega}\Sigma X}
\def\SX{\Sigma X}
\def\H{\widetilde H}

\def\l{[\!(}
\def\r{)\!]}

\def\Ps{\vec{P}}
\def\Ss{\vec{S}}

\title[\today]{Paths of the directed suspension}
\author{ Andrzej Weber \and Krzysztof Ziemia\'nski}
\address{University of Warsaw, Institute of Mathematics \\
 Banacha 2, 02-097 Warszawa, Poland\\
and
 Institute of Mathematics\\ Polish Academy of Sciences\\ ul. \'Sniadeckich 8, 00-656 Warszawa, Poland}
 \email{aweber@mimuw.edu.pl}

\address{University of Warsaw, Institute of Mathematics\\
 Banacha 2, 02-097 Warszawa, Poland}
 \email{ziemians@mimuw.edu.pl}

\begin{document}

\begin{abstract}
	We prove that the loop space of the directed suspension of a directed space is homotopy equivalent to the James construction. In particular, it does not depend on the directed structure of a given directed space.
\end{abstract}

\maketitle

\section{Introduction}
The operation of suspension of a topological space
$$X\mapsto \Sigma X=S^1\wedge X$$
is far from being faithful. Whole information about noncommutativity of $\pi_1(X,x_0)$ is lost. One can read only the abelianization
$$\pi_1(X,x_0)_{ab}=H_2(\Sigma X)\,.$$
Similarly, the product structure in the cohomology $H^*(X)$ is lost. Multiplication in $H^{>0}(\Sigma X)$ is trivial. Recently a new kind of structures came to attention of topologists. One considers a subset of \emph{allowable} paths $\Ps X\subset PX$ in the space of all paths in $X$. This kind of structure is called a directed structure, or a d-stucture \cite{Gr}. Any topological space $X$ has many directed structures. The poorest one contains constant paths only; this is the discrete structure $X_\delta$. Another structure is the richest: all paths are allowed. This is the total structure $X_{tot}$. Of course, there are many intermediate structures. The purpose of our paper is to show that, in some sense, the suspension forgets information about the directed structure.

 Given a topological space $X$ with a chosen directed structure, there are many ways of constructing directed structure on $\Sigma X$. There are two particularly interesting cases to consider. One is the smash product with the circle $S^1$ with total structure
$$\Sigma X=S^1_{tot}\wedge X\,,$$
the other one is the smash product with the oriented circle $S^1$, in which only the paths with non-decreasing angle are allowed. This suspension is denoted by
$$\vSigma X=\Ss^1\wedge X\,.$$
In both cases, we can consider either the minimal structure, which contains only finite concatenations of directed paths coming from $S^1_{tot}\times X$ or $\Ss^1\times X$, or the complete one, which allows infinite concatenations. Obviously, all these directed structures depend on the directed structure in $X$. We formulate our results in a way that they can be applied to a wider variety of directed structures. We will not assume explicitly that the directed structure on $\Sigma X$ comes from a directed structure on $X$, only that it satisfies certain conditions.
We will study  the spaces of directed loops $\vOmega\Sigma  X$ based at the distinguished point of the suspension. It is a subspace of the space of all loops $\Omega\Sigma X$.

\begin{thm} \label{twierdzenieglowne}
	Suppose that $X$ is a connected space. Furthermore, assume that the suspension $\Sigma X$ is equipped with a directed structure that is excisive, finitely generated and translatable (see Def \ref{fingen},  \ref{translatable} and \ref{exci}). With the notation as above, the inclusion
\begin{equation}
	\vOmega\Sigma X	\subseteq\Omega\Sigma X\end{equation}
is a weak homotopy equivalence.
\end{thm}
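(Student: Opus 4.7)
The plan is to exhibit both $\vOmega\Sigma X$ and $\Omega\Sigma X$ as weakly equivalent to the James construction $J(X)$, in a way compatible with the inclusion. Recall that for a connected (well-pointed) space $X$, the classical theorem of James provides a weak equivalence $J(X) \simeq \Omega\Sigma X$, where $J(X)$ is the free topological monoid on $X$ with the basepoint as unit, and the comparison map sends a word $(x_1,\ldots,x_n)$ to the concatenation of the meridional loops through $x_1,\dots,x_n$.

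The key observation is that each meridional loop in $\Sigma X$ -- the image of $\{x\}\times S^1$ -- is, under any directed structure built on the oriented suspension, itself a directed loop. Consequently James' map lifts through the inclusion, giving a factorization $J(X)\to \vOmega\Sigma X \hookrightarrow \Omega\Sigma X$ of the James equivalence. By two-out-of-three it suffices to show that $J(X)\to \vOmega\Sigma X$ is a weak equivalence. To do this I would filter $J(X)$ by word length, $J_n(X)\subset J(X)$, and filter $\vOmega\Sigma X$ by the subspaces $\vOmega_n\Sigma X$ of directed loops admitting a decomposition into at most $n$ pieces coming from a fixed finite generating family of the directed structure. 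The finitely generated hypothesis is precisely what ensures that the subspaces $\vOmega_n\Sigma X$ exhaust $\vOmega\Sigma X$, while translatability lets one reparametrize the pieces of a concatenation so that the decomposition data behaves well in families.

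The main obstacle is to prove that the $n$-th filtration map $J_n(X)\to \vOmega_n\Sigma X$ is a weak equivalence, and that this assembles correctly on colimits. I would proceed by induction on $n$. The excisive hypothesis should provide a homotopy pushout presentation of $\vOmega_n\Sigma X$ in terms of $\vOmega_{n-1}\Sigma X$ and the space of genuine length-$n$ directed concatenations, matching the pushout presentation of $J_n(X)$ with cofiber $X^{\wedge n}$; translatability is used to identify the space of length-$n$ concatenations, modulo reparametrization and basepoint collapsing, with $X^n$. Combining the inductive hypothesis with this identification yields the equivalence at stage $n$. The most delicate point will be the passage to the colimit: one must check that the filtration $\{\vOmega_n\Sigma X\}$ is sufficiently well-behaved (for instance, that successive inclusions are cofibrations, or at least that the colimit computes the homotopy colimit) so that level-wise weak equivalences assemble into a weak equivalence $J(X)\to \vOmega\Sigma X$. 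All three hypotheses -- excisive, finitely generated, and translatable -- appear to be tailored exactly to force this inductive machinery through.
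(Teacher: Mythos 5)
Your opening move---factoring the classical James map as $J(X)\to\vOmega\Sigma X\hookrightarrow\Omega\Sigma X$ and reducing by two-out-of-three to showing that $J(X)\to\vOmega\Sigma X$ is a weak equivalence---is exactly the reduction the paper makes. But the argument you propose for that reduction has a genuine gap, rooted in a misreading of the hypotheses. ``Finitely generated'' in Definition \ref{fingen} does not assert the existence of a finite generating family of paths; it asserts only that the minimal structure $\vP_{\text{min}}$ (finite concatenations of paths lifted from $\vR\times X$) is \emph{dense} in the given structure. For the completed structure a directed loop may pass through the basepoint infinitely many times (the ``peacock feather eye'' discussed in the introduction), so it admits no decomposition into finitely many generating pieces at all: your subspaces $\vOmega_n\Sigma X$ do not exhaust $\vOmega\Sigma X$, their union is merely dense, and the colimit argument collapses. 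Similarly, ``excisive'' (Definition \ref{exci}) is a Mayer--Vietoris condition on the single pair $(L_+,L_-)$ of Moore paths ending in the two sub-cones of $\Sigma X$; it provides no homotopy pushout presentation of filtration stages of the loop space, and producing such a presentation (with cofiber $X^{\wedge n}$) is precisely the hard content that your sketch assumes rather than proves.

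The paper takes a homological route instead. One first shows (Theorem \ref{spojnosc}) that $\vOmega\Sigma X$ is connected---this is where ``finitely generated'' is actually used, via density of $\vOmega_{\text{min}}$. Then one decomposes the contractible Moore path space as $\vec{P}(\Sigma X,*)=L_-\cup_{L_0}L_+$, uses translatability to build explicit homotopy equivalences $L_\pm\simeq\vOmega\Sigma X$ and $L_0\simeq X\times\vOmega\Sigma X$ (Propositions \ref{prop1}--\ref{prop3}), and reads off from the Mayer--Vietoris sequence (legitimate by excisiveness) that the multiplication map $V\otimes A\to\widetilde A$, $w\otimes b\mapsto\beta_*(w)\cdot b$, is an isomorphism, which characterizes $H_*(\vOmega\Sigma X;\F)$ as the tensor algebra $T(V)$ (Theorem \ref{homomain}). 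Since $JX$ and $\vOmega\Sigma X$ are topological monoids, hence have abelian fundamental groups, the resulting homology isomorphism over all prime fields and $\Q$ upgrades to a weak equivalence (Corollary \ref{cormain}). If you want to salvage your geometric filtration, you would at minimum have to work in $\vOmega_{\text{min}}$ and then justify passing to the closure, and you would need to construct the pushout presentation of the filtration stages from scratch; none of the three stated hypotheses hands it to you.
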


The technical assumptions  of Theorem \ref{twierdzenieglowne} are satisfied for a natural class of directed spaces:

\begin{thm} The conclusion of the Theorem \ref{twierdzenieglowne} holds for the suspension of the realization of cubical complexes with minimal and completed directed structures.\end{thm}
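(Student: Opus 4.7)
The plan is to reduce the statement directly to Theorem \ref{twierdzenieglowne} by checking, for $X$ the geometric realization of a cubical complex, that the minimal and completed directed structures on $\vSigma X$ are excisive, finitely generated, and translatable in the senses of Definitions \ref{fingen}, \ref{translatable} and \ref{exci}. Since this is a verification of explicit structural conditions, the proof is essentially a combinatorial unpacking rather than a new homotopy-theoretic argument.

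First I would make the cubical structure on $\vSigma X$ explicit: a cubical structure on $X$ induces one on $\vSigma X=\Ss^1\wedge X$ in which each $n$-cube $C\subset X$ contributes an $(n+1)$-cube $I\times C$, after the appropriate collapsing of $\Ss^1\vee X$ to the basepoint. The directed structure on each such $(n+1)$-cube is the product of the standard order on the suspension coordinate with the order inherited from $C$. The minimal directed structure on $\vSigma X$ then consists of finite concatenations of directed paths lying in individual cubes, while the completed structure allows countable concatenations. With this description, every directed path is described by its combinatorial passage through a sequence of cubes.

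Finite generation should follow by taking as generators the affine directed paths inside single cubes of $\vSigma X$: any directed path with compact image meets only finitely many open cells, so only finitely many generators are involved along it. Excisiveness should come from cubical subdivision: any directed path in $\vSigma X$ can be subdivided along the preimages of the natural open cover by cubical cells, placing each piece inside a single cube. Translatability will follow from the fact that the suspension coordinate $\Ss^1$ permits directed pre- and post-composition by loops that traverse a prescribed arc of the circle, yielding a well-defined shift operation on $\vOmega\vSigma X$ whose image lies in the basepoint's neighborhood filtration.

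The main technical difficulty I anticipate is the completed structure: one must verify that permitting infinite concatenations does not destroy finite generation, and that excisiveness persists when a directed path traverses infinitely many cubes on its approach to the basepoint. This should be handled by a compactness argument together with local finiteness of the cubical structure along any compact trajectory, but reconciling the countable-concatenation definition with the finite-generation hypothesis is where most of the real work will lie. Once this is done, both structures fall under the hypotheses of Theorem \ref{twierdzenieglowne} and the conclusion follows.
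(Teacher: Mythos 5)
Your overall plan---verifying translatability, finite generation and excisiveness so that Theorem \ref{twierdzenieglowne} applies---is exactly the paper's, but both nontrivial verifications as you sketch them have real gaps. The more serious one is excisiveness. Definition \ref{exci} asks that the pair $(L_+,L_-)=(\en^{-1}(C_+),\en^{-1}(C_-))$ of subspaces of the Moore path space $\Ps(\Sigma X,*)$ be an excisive couple in Spanier's sense, i.e.\ that Mayer--Vietoris is available for the decomposition $\Ps(\Sigma X,*)=L_-\cup_{L_0}L_+$. This is a condition on singular chains in the \emph{path space}, not on individual paths, so ``subdividing a directed path along the cubical cells'' does not touch it. Since $C_\pm$ are closed, $L_\pm$ are closed and their interiors need not cover their union; one must thicken them to open sets without changing the homotopy type. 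The paper does this via Proposition \ref{cubicalNeighborhood}: a deformation retraction of $|K|$ that collapses a collar of each face produces sets $A_\pm$ containing $|K_\pm|$ in their interiors with $\en^{-1}(|K_\pm|)\hookrightarrow\en^{-1}(A_\pm)$ homotopy equivalences, and the open pair $(\en^{-1}(A_-),\en^{-1}(A_+))$ is excisive. Some such neighborhood--retraction argument is unavoidable and is absent from your proposal.

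The second gap is finite generation for the completed structure. Your assertion that a directed path with compact image ``meets only finitely many open cells, so only finitely many generators are involved along it'' would show that every completed path is already a finite concatenation of paths from single cubes, i.e.\ that the completed and minimal structures coincide. That is false: this is precisely the ``peacock feather eye'' phenomenon, a directed loop in $\vSigma X$ may pass through the basepoint infinitely many times, and local finiteness of the cubical structure does not prevent it. Definition \ref{fingen} only requires that $\vP_{\text{min}}$ be \emph{dense} in the given structure, and the paper proves this (Proposition \ref{p:CompletedDStructureIsFinitelyGenerated}) by approximating a completed path $\alpha$ by the path that agrees with $\alpha$ on the finitely many excursions leaving a small neighborhood $U_j$ of the basepoint and is constant at the basepoint elsewhere; these approximants are minimal and converge to $\alpha$ as $U_j$ shrinks. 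Translatability, by contrast, really is immediate from the affine maps $\l t,x\r\mapsto\l\lambda t+\mu,x\r$, and connectedness of $\vOmega\vSigma X$ then follows from Theorem \ref{spojnosc}, so those parts of your outline are fine.
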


The homotopy type of the space $\Omega\Sigma X$ does not depend on the directed structure of $X$. 
Therefore, the homotopy type of $\vOmega\Sigma X$ do not depend on the directed structure in $\Sigma X$.

The Theorem \ref{twierdzenieglowne} is a consequence of the following statements (Theorem \ref{homomain}, Corollary \ref{cormain} and Theorem \ref{spojnosc}): the natural map
\begin{equation}\label{JavO}
	JX\to \vOmega\Sigma X
\end{equation}
is a weak homotopy equivalence. Here $JX$ is the free topological monoid generated by $X$, which is called the James construction. By the classical result of James \cite{Ja} (see also \cite[\S5.3]{CaMi}) the natural map
\begin{equation}JX\to \Omega\Sigma X\end{equation}
is also a weak homotopy equivalence.   
In another paper \cite{Mi}, Milnor have proved by simplicial methods that $\Omega\Sigma X$ is homotopy equivalent to the topological group $GX$ freely generated by $X$. These result coincide since $JX$ is connected, and then the group completion does not change the homotopy type. If $X$ is not connected then one is tempted to prove that (\ref{JavO})
is a homotopy equivalence. We skip that proof due to point-set topology problems. We rely only on the homological argument which works fine only for connected spaces. 

We note that in general $\vOmega\Sigma X\to \Omega\Sigma X$ is not a homotopy equivalence. For $X=S^0$ we have
$$\vOmega\vSigma X=\vOmega\Ss^1\sim JS^0=\N,$$
$$\Omega\vSigma X=\Omega\Ss^1\sim GS^0=\Z$$
and the inclusion is homotopic to the inclusion of the natural numbers into the integer numbers $\N\hookrightarrow \Z$.

We construct an explicit homotopy inverse of (\ref{JavO}). We note that the original proof of James and other proofs which are  present in the literature for the classical case do not exhibit an inverse map. Our construction of the inverse map is specific for the directed spaces.

In the course of the proof we meet several technical problems. Most of them are caused by the fact  that the suspension $\vSigma X$ might have pretty bad structure in the neighbourhood of the distinguished base point. For any space $X$, the suspension admits the effect of "peacock feather eye", i.e. there are directed loops which infinitely times pass through the distinguished point.

\obrazek{rys2}{4cm}

\begin{center}$[-1,1]\times X$\hskip180pt $\Sigma X$\end{center}

\noindent To avoid this problem we first consider the space of paths which are finite concatenations of paths coming from $[-1,1]\times X$, i.e.~we consider the minimal directed structure and later pass to the closure.

\medskip
The paper is organized as follows: after introducing the notions in \S\ref{directed:spaces} and \ref{section:cubical} 
we discuss directed structures in the smash product $X\wedge Y$. In \S\ref{smash} we show under suitable conditions that if $X$ and $Y$ are connected directed spaces, then $\vOmega(X\wedge Y)$ is connected. Further in \S\ref{suspension} we describe a convenient model of the suspension $\Sigma X$. The James construction is recalled in \S\ref{James}.
We modify  the classical homological proof of James in the context of directed spaces. That is done in the \S\ref{homologicznie}. In the next section \S\ref{homotopijnie}, we construct a homotopy inverse of the James map (\ref{JavO}) explicitly. We note that such a construction was not done in the classical context and might be much harder for non-directed paths.
Finally, in \S\ref{straightening} we show how to  deform in a canonical way a path to another one which corresponds to a configuration of points in $X$, that is to a path which lies in the image of the James map.

\section{Directed spaces}
\label{directed:spaces}

Let $(X,x_0)$ be a pointed Hausdorff space, which is locally contractible.
The following definition is due to Grandis \cite{Gr}

\begin{df}
	\emph{A d-structure} on a topological space $X$ is a family of paths 
\[
	\mathcal{D}\subseteq P(X):=\map([0,1],X),
\]
	 called \emph{d-paths}, that satisfies the following conditions:
	\begin{itemize}
	\item{Every constant path a d-path.}
	\item{The concatenation of d-paths is a d-path.}
	\item{Non-decreasing reparametrization of a d-path is a d-path.}
	\end{itemize}
	\emph{A d-space}, or \emph{a directed space}, is a space equipped with a d-structure. If it is clear what d-structure is considered on a space $X$, it is denoted by $\vP(X)$.
\end{df} 
A map $f:X\to Y$, where $X$, $Y$ are d-spaces, is \emph{a d-map} if it preserves a d-structure, i.e., if $f(\alpha)\in\vec{P}(Y)$ whenever $\alpha\in\vP(X)$. The d-spaces and d-maps form a category, which is complete and cocomplete \cite{Gr}. For every family $\mathcal{F}\subseteq P(X)$ there exists the unique minimal d-structure $\overline{\mathcal{F}}\supseteq\mathcal{F}$ on $X$ generated by $\mathcal{F}$. It contains all constant paths and all paths having the form
\[
	(\alpha_1*\dots*\alpha_n)\circ f,
\]
where $\alpha_i\in\mathcal{F}$ for $i\in\{1,\dots,n\}$ and $f:[0,1]\to[0,1]$ is a continuous non-decreasing map.

If $X$ is a d-space, and $A,B\subseteq X$, then we denote
\begin{equation}
	\vP(X)_A^B:=\{\alpha\in \vP(X):\; \alpha(0)\in A,\; \alpha(1)\in B\}.
\end{equation}
If $(X,x_0)$ is a pointed d-space, we define \emph{the space of directed loops on $X$} as
\begin{equation}
	\vOmega(X):=\vP(X)_{x_0}^{x_0}.
\end{equation}
A path $\alpha:[a,b]\to X$, for $(a,b)$ not necessarily equal to $(0,1)$, will be called a d-path if its reparametrization $[0,1]\ni t\mapsto \alpha(a+t(b-a))\in X$ is a d-path.

Here we list examples of d-spaces that play an important role in this paper:
\begin{itemize}
	\item{Any topological space has two extreme d-structures: the total $X_{tot}$ one which contains all paths, and the discrete one $X_\delta$ which contains only constant paths.}
	\item{\emph{A directed Euclidean space $\vec{\R}^n$}, where d-paths are all paths having non-decreasing coordinates.}
	\item{Every subspace $Y\subseteq X$ of a d-space  is a d-space, with a d-structure $\vP(X)\cap P(Y)$. Particular cases are $\vec{I}=[0,1]\subseteq \vec{\R}$ (\emph{the directed interval}) and $\vec{I}^n\subseteq \vec{\R}^n$ (\emph{the directed $n$--cube}).}
	\item{For every closed subset $Y\subseteq X$ of a d-space, the quotient $X/Y$ is also a d-space, with a d-structure generated by paths having the form $p\circ\alpha$, where $\alpha\in\vP(X)$ and $p:X\to X/Y$ is the projection. 
We obtain the minimal quotient d-structure $P_{\text{min}}(X/Y)$.}
	\item{The completed quotient d-structure on $X/Y$, denoted by $\vP_c(X/Y)$, that contains all paths $\alpha$ for which there exists a (non-necessarily finite) cover of $[0,1]$ by closed intervals $E_a$, $a\in A$, such that $\alpha|_{E_a}$ is a projection of a d-path on $X$. The completed structure is the completion of the minimal d-structure in the sense of \cite{Z2}.}
\item{An important special case is \emph{the directed circle} 
\[	\vec{S}^1=\vec{I}/\{0,1\}\cong \vec{\R}/\big((-\infty,-1]\cup[1,+\infty)\big).\]
	In this case the minimal and the completed d-structure are equal.
	}
\end{itemize}
Note that the minimal structure in $X/Y$ is not always natural. For example, take the interval $X=[-1,1]$ with the total structure. Then $X/\{-1,1\}\simeq S^1$ as topological spaces. But the directed paths $\alpha:[a,b]\to X/\{-1,1\}$ of the minimal quotient structure have the property that $\alpha^{-1}(*)$ has only finitely many connected components. 

\begin{df}
	Let $X$ be a space and let $\vP(X)$ be a d-structure on $X$. \emph{The completion} of $\vP(X)$ is a d-structure $\vP_c(X)$ on $X$ such that $\alpha\in \vP_c(X)$ if and only if there exists a cover
	\[
		\bigcup_{i\in J}	 [a_i,b_i]=[0,1]
	\]
	such that $\alpha|_{[a_i,.b_i]}\in \vP(X)$ for all $i\in J$.
\end{df}

\section{Cubical complexes}
\label{section:cubical}
The completion of the minimal structure on a quotient space will be called \emph{the completed structure}. Clearly, $\vP_c(X_{tot}/Y_{tot})\cong P_{tot}(X/Y)$.

Semi-cubical complexes \cite{FGR}  form a family of d-spaces, which is especially important because of their applications in Computer Science. In this paper we consider cubical sets: a class which contains semi-cubical sets and is closed with respect to taking suspensions.

\begin{df}[{\cite{K}}]
	\emph{A cubical set} $K$ is a sequence of disjoint sets $(K_n)$, $n=0,1,\dots$ equipped with 
	\begin{itemize}
		\item{face maps $d^\varepsilon_{i}:K_n\to K_{n-1}$, for $n>0$, $i\in\{1,\dots,n\}$, $\varepsilon\in\{0,1\}$,}
		\item{degeneracy maps $s_i:K_n\to K_{n+1}$, for $n\geq 0$, $i=1,\dots,n+1$,}
	\end{itemize}
	which satisfy the following cubical relations:
	\begin{itemize}
		\item{$d^\varepsilon_i d^\eta_j=d^\eta_{j-1} d^\varepsilon_i$ for $i<j$,}
		\item{$s_is_j=s_{j-1} s_i$ for $i<j$,}
		\item{$s_jd^\varepsilon_i=\begin{cases}
			d^\varepsilon_i s_{j-1} &\text{for $i<j$,}\\
			\text{identity} & \text{for $i=j$,}\\
			d^\varepsilon_{i-1}s_j & \text{for $i>j$.}
		\end{cases}$}
	\end{itemize}
\end{df}

Every cubical set has a geometric realization which is a d-space. For $\varepsilon\in\{0,1\}$ and $i\in\{1,\dots,n\}$, define d-maps
\begin{align*}
	\delta^\varepsilon_i: \vec{I}^{n-1} \ni (t_1,\dots,t_{n-1})& \mapsto (t_1,\dots,t_{i-1},\varepsilon,t_{i+1},\dots,t_n)\in \vec{I}^n\\
	\sigma_i:\vec{I}^{n+1}\ni (t_1,\dots,t_{n+1})&\mapsto (t_1,\dots,t_{i-1},t_{i+1},\dots,t_{n+1})\in\vec{I^n}.
\end{align*}	
\emph{The geometric realization} of a cubical set $K$ is the quotient space
\begin{equation}
	|K|=\coprod_{n\geq 0} K_n \times \vec{I}^n/\sim,
\end{equation}
where $\sim$ is generated by $(c,\delta^\varepsilon_i(\mathbf{t}))\sim (d^\varepsilon_i(c),\mathbf{t})$ and $(c,\sigma_j(\mathbf{t}))\sim (s_j(c),\mathbf{t})$.

There are (at least) two natural d-structures on $|K|$:
\begin{itemize}
\item{The minimal quotient d-structure $\vP_\text{min}(|K|)$, namely, a continues path $\alpha$ is a d-path if there exists a sequence of numbers $0=t_0<t_1<\dots<t_k=1$, cubes $c_i\in K_{d(i)}$ and d-paths $\beta_i:[t_{i-1},t_i]\to \vec{I}^n$ such that $\alpha(t)=(c_i,\beta_i(t))$ for $t\in [t_{i-1},t_i]$.
This is a minimal d-structure such that the inclusions of cubes induce d-maps.
}
\item{The completed d-structure $\vP_{c}(|K|)$, namely, a path $\alpha$ is a d-path if there exists a (non-necessarily finite) cover of $[0,1]$ by closed intervals $E_a$, $a\in A$, such that $\alpha|_{E_a}$ has the form $(c,\beta)$, for $c\in K_n$ and a d-path $\beta:E_a\to \vec{I}^n$. This coincides with the completed d-structure on a quotient space.}
\end{itemize}

The geometric realization of $K$ with the minimal quotient (resp. the completed) d-structure completed d-structure will be denoted by $|K|_{\text{min}}$ (resp.\ $|K|_c$). 

We will need the following technical result.

\begin{prp}\label{cubicalNeighborhood}
	Let $K$ be a cubical set, $L$ its subset, and $p\in K_0$. Then there exists a subset $A\subseteq |K|$ such that the interior of $A$ contains $|L|$ and the maps 
\[
	\vP(|K|)_p^{|L|}\to \vP(|K|)_p^A,
\]	
	induced by the inclusion is a homotopy equivalence, where $\vP$ stands for either $\vP_\text{min}$ or $\vP_c$.
\end{prp}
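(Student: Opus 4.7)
My plan is to construct, together with $A$, a continuous family of d-maps $r_s : |K| \to |K|$ ($s\in[0,1]$) with the following properties: $r_0 = \mathrm{id}$; $r_s|_{|L|} = \mathrm{id}$; $r_1(A)\subseteq |L|$; $r_s(A)\subseteq A$; and $r_s$ is the identity outside a slightly larger open neighborhood $A' \supseteq A$ chosen so that either $p\in |L|$ or $p\notin A'$. With such data, the homotopy inverse of the inclusion $\iota$ is defined by $\psi(\alpha) = r_1\circ \alpha$: the endpoint $r_1(\alpha(1))\in |L|$ by construction, the starting point $r_1(p)=p$ in either case, and $\psi(\alpha)$ is a d-path because each $r_s$ is a d-map. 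Both composites $\iota\circ\psi$ and $\psi\circ\iota$ are homotopic to the identity via the single formula $(\alpha,s)\mapsto r_s\circ\alpha$, which stays in the respective path space thanks to $r_s(A)\subseteq A$ and $r_s(|L|)\subseteq |L|$.

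The family $r_s$ is built cube by cube. For each $n$-cube $c\in K_n$, $L_c := |L|\cap |c|$ is a union of faces of $\vec{I}^n$, each determined by fixing some coordinates to values in $\{0,1\}$. The key observation is that for any such face $F$ the affine retraction $\pi_F : \vec{I}^n \to F$ obtained by replacing the fixed coordinates with their prescribed values is a d-map of $\vec{I}^n$, since its associated matrix has non-negative entries; more generally, the interpolation $\pi_F^s = (1-s)\,\mathrm{id} + s\pi_F$ is a d-map for every $s$. To handle $L_c$ as a union of faces, I choose $\varepsilon>0$ small enough that any two distinct faces fixing the same coordinate to different values lie in separate connected components of $A_c := \{x\in |c| : d(x,L_c)\leq\varepsilon\}$. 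In each connected component, the retraction is defined coordinate-wise by formulas of the form $x_i \mapsto x_i + s\varphi(x_i)(\varepsilon_i - x_i)$, where $\varphi$ is a bump function supported in the corresponding component of $A'_c$ and $\varepsilon_i$ is the common value to which the faces of $L_c$ present in that component fix coordinate $i$. A direct calculation verifies the d-map property of each such modification for a suitable $\varphi$. Because the data depend only on the faces and not on the ambient cube, compatibility with the face and degeneracy maps of $K$ is automatic, and the $r^c_s$ glue to a global $r_s$ on $|K|$.

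The main obstacle is verifying the d-map property of the global $r_s$ for both $\vP_\text{min}$ and $\vP_c$, and handling cases where the combinatorial structure of $L_c$ inside a single connected component is more intricate than the coordinate-by-coordinate picture above. For $\vP_\text{min}$, a d-path is a finite concatenation of d-paths in individual cubes, so its image under $r_s$ is again such a concatenation and hence a d-path. The case of $\vP_c$ follows by the same argument applied to closed covers of $[0,1]$. The remaining technicalities, namely $r_s(A)\subseteq A$, continuity across cube boundaries, and $p\notin A'$ when $p\notin |L|$, reduce to choosing $\varepsilon$ sufficiently small and to the explicit affine form of the retractions; more intricate configurations within a component are dealt with by applying the elementary retractions $\pi_F^s$ successively in an order dictated by the face poset of $L_c$.
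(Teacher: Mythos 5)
Your top-level strategy is the same as the paper's: produce a homotopy $r_s:|K|\to|K|$ of d-maps starting at the identity whose time-one map sends a set $A$ with $|L|\subseteq\operatorname{int}A$ into $|L|$, and let composition with $r_s$ furnish the homotopy inverse and the two homotopies on the path spaces. That reduction is fine. The gap is in the construction of $r_s$, and it is precisely the point you defer at the end. By insisting that $r_s$ fix $|L|$ pointwise and be built from affine retractions $\pi_F^s$ onto individual faces, you are committed to producing a strong deformation retraction of a neighborhood of $L_c$ onto $L_c$ inside each cube. When $L_c$ is a union of faces meeting inside one component of your $\varepsilon$-neighborhood --- already for $L_c=\{t_1=0\}\cup\{t_2=0\}$ in a square --- the coordinatewise formula has no ``common value $\varepsilon_i$'' to use, and the fallback of applying the $\pi_F^s$ successively in some order fails: composing the retractions onto $\{t_1=0\}$ and $\{t_2=0\}$ collapses a neighborhood of the union onto the corner $\{(0,0)\}$, not onto the union, and no ordering of affine face-retractions repairs this while fixing $L_c$. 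Since subcomplexes whose intersection with a cube is a non-face union of faces are the generic case here (the paper applies the proposition to $|K_-|$ and $|K_+|$), this is not a removable technicality. The cross-cube compatibility you call ``automatic'' also is not, once $\varepsilon$, the bump functions, and the component decomposition are allowed to depend on $c$.

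The paper sidesteps all of this by dropping the requirement that $|L|$ be fixed pointwise. It takes the single monotone map $r:\vec{I}\to\vec{I}$ that collapses $[0,\tfrac13]$ to $0$ and $[\tfrac23,1]$ to $1$, sets $r^n=r\times\dots\times r$, and glues these into $R_t=(1-t)\mathrm{id}+tr^n$ on $|K|$; commutation with all face and degeneracy maps is genuinely automatic because the same one-variable map is used in every coordinate of every cube. Then $A:=R_1^{-1}(|L|)$ automatically contains $|L|$ in its interior (the interior of $(r^n)^{-1}(\partial\vec{I}^n)$ contains $\partial\vec{I}^n$), $R_1(A)\subseteq|L|$ by definition, and $R_t$ preserves each face and hence $|L|$ and $A$ --- which is all the argument on path spaces needs. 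Your union-of-faces problem evaporates because $r^n$ maps the whole outer-third neighborhood of $\{t_1=0\}\cup\{t_2=0\}$ into that union (one of the two coordinates lands on $0$), even though it moves points within $|L|$. If you relax your requirement $r_s|_{|L|}=\mathrm{id}$ to $r_s(|L|)\subseteq|L|$ and replace your bump-function retractions by this one universal coordinatewise map, your argument closes.
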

\begin{proof}
	The same argument is valid in both cases. Let $r:\vec{I}\to\vec{I}$ be a d-map given by the formula
	\[
		r(s)=\begin{cases}
			0 & \text{for $s\in[0,\tfrac{1}{3}]$,}\\
			3(s-\tfrac{1}{2})+\tfrac{1}{2} & \text{for $s\in[\tfrac{1}{3},\tfrac{2}{3}]$,}\\
			1 & \text{for $s\in[\tfrac{2}{3},1]$,}
		\end{cases}
	\]
	and let $r^n:\vec{I}^n\to\vec{I}^n$ denote its $n$--th product. Furthermore, let $r^n_t=(1-t)id_{\vec{I}^n}+tr^n$, $t\in[0,1]$, be a homotopy between the identity map on $\vec{I}^n$ and $r^n$. For every $t$, the family $r^n_t$, $n\geq 0$, commutes with the face maps $\delta^\varepsilon_i$ and the degeneracy maps $\sigma_i$; as a consequence, the formula
\[
	R_t:|K|\ni (c,(t_1,\dots,t_n))\mapsto (c,r^n(t_1,\dots,t_n))\in |K|
\]	
	defines a homotopy between the identity map on $|K|$ and $R_1:|K|\to |K|$. Notice that, for every $n\geq 0$, the interior of $(r^n)^{-1}(\partial \vec{I}^n)$ contains $\partial \vec{I}^n$. Therefore, the interior of  $A=(R_1)^{-1}(|L|)$ contains $|L|$. Finally, we need to show that the inclusion $\vP(|K|)_p^{|L|}\subseteq \vP(|K|)_p^{A}$ is a homotopy equivalence. Indeed, $R_1$ induces the map $(R_1)^*:\vP(|K|)_p^{A}\to \vP(|L|)_p^{|L|}$ which is a homotopy inverse of the inclusion, with suitable homotopies induced by $R_t$.
\end{proof}

\section{Connectedness of $\vOmega(X\wedge Y)$}\label{smash}
Here we prove a general fact about  directed structures in the smash product
\[
	X\wedge Y=(X\times Y)/(X\vee Y)
\]
of pointed directed spaces.
We will apply it for $\Sigma X=S^1_{tot}\wedge X$ and $\vSigma X=\vec{S}^1\wedge X$.

\medskip
The minimal directed structure $\vP_{\text{min}}(X\wedge Y)\subset P(X\wedge Y)$  consists of the paths which are concatenations of a finite number of d-paths coming from $X\times Y$. 

\begin{df}\label{fingen}
	We say that a directed structure $\vP(X\wedge Y)$ on the smash product of pointed directed spaces $X,Y$ is \emph{finitely generated} if it contains the minimal structure and, for any pair of points $a,b\in X\wedge Y$, the space $\vP_{\text{min}}(X\wedge Y)_a^b$ is dense in $\vP(X\wedge Y)_a^b$. In particular, $\vOmega_{\text{min}}(X\wedge Y)$ is dense in $\vOmega(X\wedge Y)$.\end{df}
	
\begin{prp}\label{p:CompletedDStructureIsFinitelyGenerated}
	The completed d-structure $\vP_c(X\wedge Y)$ is finitely generated.
\end{prp}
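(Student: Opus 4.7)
I would prove Proposition~\ref{p:CompletedDStructureIsFinitelyGenerated} by verifying the two conditions of Definition~\ref{fingen}. The containment $\vP_\text{min}(X\wedge Y)\subseteq\vP_c(X\wedge Y)$ is immediate: any element of $\vP_\text{min}$ is a non-decreasing reparametrization of a finite concatenation of projections of $\vP(X\times Y)$-d-paths, and the corresponding finite partition of $[0,1]$ is a closed cover witnessing membership in $\vP_c$.

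For density, I would fix $\alpha\in\vP_c(X\wedge Y)_a^b$ and a basic open neighborhood $W=\bigcap_{l=1}^m\{\beta:\beta(K_l)\subseteq U_l\}$ of $\alpha$ in the compact-open topology. Set $C=\alpha^{-1}(\ast)\subseteq[0,1]$, closed, and decompose $[0,1]\setminus C$ into its countable family of open \emph{excursion} intervals $\{(c_i,d_i)\}_{i\in I}$. Since the quotient $p:X\times Y\to X\wedge Y$ is a homeomorphism off $X\vee Y$, $\alpha|_{(c_i,d_i)}$ lifts uniquely to a continuous path in $X\times Y\setminus(X\vee Y)$ which extends continuously to $[c_i,d_i]$ with endpoints in $X\vee Y$; the cover $\{E_a\}$ witnessing $\alpha\in\vP_c$ forces this lift to be a $\vP(X\times Y)$-d-path on each $E_a\cap[c_i,d_i]$, by uniqueness of the lift. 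Call an excursion \emph{relevant} if it meets $K_l\setminus C$ for some $l$, or contains $0$ (when $a\neq\ast$) or $1$ (when $b\neq\ast$). Because each $K_l\setminus C$ is compact in the open set $\bigsqcup_i(c_i,d_i)$, the index set $I_0$ of relevant excursions is finite. Define
\[
\alpha'(t)=\begin{cases}\alpha(t)&t\in\bigcup_{i\in I_0}[c_i,d_i],\\ \ast&\text{otherwise.}\end{cases}
\]
Continuity at the endpoints of relevant excursions lying in $C$, and the endpoint conditions $\alpha'(0)=a$, $\alpha'(1)=b$, are immediate. Membership $\alpha'\in W$ follows by a case analysis on whether $K_l$ meets $C$: if not, $K_l$ lies in relevant excursions and $\alpha'|_{K_l}=\alpha|_{K_l}\subseteq U_l$; if so, $\ast\in U_l$ (witnessed by $\alpha(K_l\cap C)=\{\ast\}\subseteq U_l$) and the two possible values of $\alpha'$ on $K_l$ are both in $U_l$.

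The main obstacle is showing $\alpha'\in\vP_\text{min}(X\wedge Y)$, which reduces to proving $\alpha|_{[c_i,d_i]}\in\vP_\text{min}$ for each $i\in I_0$. For interior points $t\in(c_i,d_i)$, combining the local $\vP(X\times Y)$-d-path structure from $\{E_a\}$ with the uniqueness of the lift (and closure under concatenation at points where $\alpha(t)\neq\ast$) gives a closed relative neighborhood of $t$ on which $\alpha$ decomposes as a finite concatenation of d-path projections, and compactness of $[c_i+\eta,d_i-\eta]$ promotes this to a single finite decomposition on the truncated interval. The delicate case is at $t=c_i$ or $t=d_i$, where the cover $\{E_a\}$ may only supply $\{c_i\}$ or singletons accumulating at $c_i$, providing no d-path with $c_i$ in the interior. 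I would handle this by a further truncation: replace $\alpha$ on $[c_i,c_i+\eta]\cup[d_i-\eta,d_i]$ by constant $\ast$-pieces, choosing $\eta$ small enough that, for each $l$ with $c_i\notin K_l$ (respectively $d_i\notin K_l$), the cap $[c_i,c_i+\eta]$ (resp.\ $[d_i-\eta,d_i]$) is disjoint from $K_l$, using that disjoint compacts in $[0,1]$ have positive distance; for the remaining $l$ with $c_i\in K_l$ we automatically have $\ast=\alpha(c_i)\in U_l$, so the modification preserves $\alpha'\in W$. This truncation-and-concatenation step is what makes the infinite ``peacock feather eye'' structure of $\vP_c$ approximable by finite concatenations in $\vP_\text{min}$, and it is the genuine technical heart of the argument.
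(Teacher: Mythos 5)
Your overall strategy --- keep finitely many excursions of $\alpha$ away from the base point and collapse the rest to $\ast$ --- is the same as the paper's, but the criterion you use to select which excursions to keep does not work, and the finiteness claim it rests on is false. You declare an excursion relevant when it meets $K_l\setminus C$ for some $l$, and assert that $K_l\setminus C$ is compact; it is not: it is the intersection of a compact set with the open set $[0,1]\setminus C$, hence merely open in $K_l$. Worse, the situation one actually must handle is exactly where this fails: a basic neighborhood of $\alpha$ typically involves $K_l=[0,1]$ (or any compact set meeting $C$ in a limit of excursion endpoints), and for a ``peacock feather'' loop whose excursions accumulate at a point of $C$, every excursion meets $K_l\setminus C$. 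Then your index set $I_0$ is infinite, $\alpha'=\alpha$, and nothing has been gained towards landing in $\vP_{\text{min}}$. The defect is structural, not cosmetic: keeping every excursion that touches $K_l$ is incompatible with finiteness whenever $K_l$ meets infinitely many excursions.

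The paper filters excursions differently: fix a decreasing neighborhood basis $\{U_j\}$ of the base point and, for each $j$, keep only those excursions that \emph{leave} $U_j$. Only finitely many excursions can leave a fixed neighborhood of $\ast$ (if infinitely many did, witnessing points would accumulate at some $t_0$; since disjoint intervals of length $\geq\varepsilon$ are finite in number, the corresponding excursion endpoints also converge to $t_0$, forcing $\alpha(t_0)=\ast$ and contradicting continuity), and the discarded excursions lie entirely inside $U_j$, so replacing them by the constant $\ast$ perturbs $\alpha$ only within $U_j$ near the base point. The resulting paths $\alpha_j$ form a net in $\vP_{\text{min}}$ converging to $\alpha$. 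This is the idea your argument is missing: excursions must be selected by how far they stray from $\ast$, not by whether they meet the compact sets of the given neighborhood. Separately, your endcap truncation --- redefining $\alpha$ to be $\ast$ on $[c_i,c_i+\eta]$ --- creates a jump at $c_i+\eta$, since $\alpha(c_i+\eta)\neq\ast$ in the interior of an excursion; so the step you single out as the technical heart is also not carried out correctly as stated.
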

\begin{proof}
	Denote $Z=X\times Y$, $V=X\vee Y$ and fix $\alpha\in \vP_c(Z/V)$. Let $\{U_j\}_{j\in J}$ be a decreasing local basis of $V/Z$ at the point $Z$, i.e., such that $U_j\supseteq U_{j'}$ for $j<j'$. Fix $j\in J$. Consider a cover of $[0,1]$ with maximal connected subsets of $\alpha^{-1}(Z\setminus V)$ and $\alpha^{-1}(U_j)$. By the compactness of $[0,1]$, there exists a finite number of pairwise disjoint intervals $E_i=[a_i,b_i]$, $i=1,\dots,n(j)$ such that $\alpha(a_i)=\alpha(b_i)=V$ and $\alpha(t)\in U_j$ whenever $t\not\in \bigcup E_i$. Define a path
	\[
		\alpha_j(t)=\begin{cases}
			\alpha(t) & \text{for $t\in \bigcup E_i$}\\
			V & \text{otherwise.}
		\end{cases}
	\]
	Now $(\alpha_j)_{j\in J}$ is a sequence of paths of $\vP_\text{min}(Z/V)$ that converges to $\alpha$. We have used here generalized sequences (or nets) and our argument works in the general topological setup.
\end{proof}

\begin{thm}\label{spojnosc} Suppose that $X$ and $Y$ are path connected. We assume that the directed structure in $X\wedge Y$ is finitely generated. Then the space of directed loops $\vOmega(X\wedge Y)$ is connected.
\end{thm}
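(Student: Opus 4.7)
The plan is to reduce to the minimal d-structure and then to contract minimal loops piece-by-piece. Since $X\wedge Y$ carries a finitely generated d-structure, $\vOmega_{\text{min}}(X\wedge Y)$ is dense in $\vOmega(X\wedge Y)$; since the closure of a connected set is connected, it suffices to show that $\vOmega_{\text{min}}(X\wedge Y)$ is path-connected. An arbitrary minimal loop $\alpha$ is by definition a concatenation $\alpha_1*\dots*\alpha_n$ where each $\alpha_i=p\circ\beta_i$ lifts to a d-path $\beta_i$ in $X\times Y$. If an internal concatenation point projects to a point other than $*$, its two lifts in $X\times Y$ necessarily coincide (the projection $p$ is injective off $V=X\vee Y$), so the two adjacent pieces merge into a single d-path in $X\times Y$. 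After iterating this merging, I may assume that every internal concatenation point of $\alpha$ projects to $*$, so that each $\alpha_i$ is already a d-loop at $*$ whose lift satisfies $\beta_i(0),\beta_i(1)\in V$.

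Next I would construct, for a single piece $p\circ\beta$ with $\beta=(\gamma,\delta)\colon[0,1]\to X\times Y$ and $\beta(0),\beta(1)\in V$, a continuous family $\beta^s$, $s\in[0,1]$, of d-paths in $X\times Y$ with endpoints in $V$, joining $\beta$ to a constant path in $V$. The cases are governed by which branch of $V=(X\times\{y_0\})\cup(\{x_0\}\times Y)$ contains each endpoint. If the endpoints lie on opposite branches, say $\delta(0)=y_0$ and $\gamma(1)=x_0$, then $\beta^s(t)=(\gamma(t),\delta((1-s)t))$ is a d-path for each $s$ (non-decreasing reparametrization in the second coordinate), its endpoints remain in $V$ throughout, and $\beta^1(t)=(\gamma(t),y_0)\in V$, so $p\circ\beta^1$ is constant at $*$. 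If both endpoints lie on the same branch, e.g.\ $\delta(0)=\delta(1)=y_0$, I would use a two-phase homotopy: first shrink the $X$-coordinate by $\beta^s(t)=(\gamma((1-2s)t),\delta(t))$ for $s\in[0,1/2]$, arriving at $\beta^{1/2}(t)=(\gamma(0),\delta(t))$; then, using a continuous (not necessarily directed) path $\eta\colon[0,1]\to X$ from $\gamma(0)$ to $x_0$ supplied by path-connectedness of $X$, set $\beta^s(t)=(\eta(2s-1),\delta(t))$ for $s\in[1/2,1]$, ending at $(x_0,\delta(t))\in V$. The symmetric cases, in which the roles of $X$ and $Y$ are swapped, are handled by analogous homotopies using path-connectedness of $Y$.

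To finish, I would contract the pieces of $\alpha$ sequentially: since each piece's endpoints remain at $*$ throughout its own contraction, the concatenation stays a well-defined minimal loop at every stage, so $\alpha$ is joined to the constant loop by a path in $\vOmega_{\text{min}}(X\wedge Y)$; combined with the density reduction, this yields the connectedness of $\vOmega(X\wedge Y)$. I expect the main obstacle to be the contraction step above: a naive radial contraction of $\beta$ violates the d-structure because d-paths cannot be reversed, and an arbitrary homotopy would push the endpoints off $V$, so that the projection would no longer be a loop. The key observation resolving this is that a non-decreasing reparametrization always preserves a d-structure, while a motion that is constant in $t$ is a trivial d-path in any d-space; combining these two ingredients, and invoking ordinary path-connectedness of $X$ or $Y$ only during the constant-in-$t$ phase, lets one slide the collapsed endpoint along $V$ to the basepoint without leaving the directed framework.
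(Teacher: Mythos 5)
Your proposal is correct and follows essentially the same route as the paper: reduce to $\vOmega_{\text{min}}(X\wedge Y)$ by density and the fact that the closure of a connected set is connected, lift each piece to $X\times Y$ with endpoints in $X\vee Y$, and contract it by a non-decreasing reparametrization of one coordinate followed by a constant-in-$t$ slide along the wedge using ordinary path-connectedness. The only (cosmetic) difference is that the paper first replaces each lifted piece by the concatenation $(\alpha_X(0),\alpha_Y)*(\alpha_X,\alpha_Y(a))$ and then treats the factors, whereas you write the collapsing homotopies directly.
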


\begin{proof} It is enough to show that  $\vOmega_{\text{min}}(X\wedge Y)$ is connected since the closure of a connected set is connected. Every path in $\vOmega_{\text{min}}(X\wedge Y)$
can be lifted to $X\times Y$ uniquely away from the base point. Therefore, every path
is a concatenation of a finite number of images of paths 
\[\alpha:[0,a]\to X\times Y\]
such that 
$$\alpha(0),\,\alpha(a)\in X\vee Y\,.$$
We will deform such paths to constant paths. 
Let $x_0$ and $y_0$ be the distinguished points in $X$ and $Y$. There are four possibilities:
\begin{enumerate}
\item ~ $\alpha(0)=(x_0,y)$ and $\alpha(a)=(x,y_0)$,
\item ~ $\alpha(0)=(x_0,y)$ and $\alpha(a)=(x_0,y')$,
\item ~ $\alpha(0)=(x,y_0)$ and $\alpha(a)=(x_0,y)$,
\item ~ $\alpha(0)=(x,y_0)$ and $\alpha(a)=(x',y_0)$.
\end{enumerate}
Let $(\alpha_X,\alpha_Y)$ be the coordinates of $\alpha$.
The path $\alpha$ is homotopic to the concatenation
\begin{equation}\label{rozklad}(\alpha_X(0),\alpha_Y)*(\alpha_X,\alpha_Y(a))\,.\end{equation}
Here $ \alpha_X(0)$ and $\alpha_Y(a)$ denote constant paths of the lengths $a$.

In the first case ($\alpha_X(0)=x_0$, $\alpha_Y(a)=y_0$)
such concatenation is projected to a constant path.

In the second case ($\alpha_X(0)=x_0$, $\alpha_X(a)=x_0$), we note that the first component of (\ref{rozklad}) is projected to a constant path. Since the $Y$ is path connected, the second component can be deformed to the path of the form $(\alpha_X,y_0)$. Again, this path is projected to the constant path.

The third and fourth cases are analogous.\end{proof}

\section{The directed  suspension}
\label{suspension}

Let $X$ be a space with a distinguished point $x_0$. We will describe the suspension $\Sigma{X}=S^1\wedge X$ as a quotient of $\R\times X$. First, we present the circle $S^1$ as the quotient space
\[
	S^1=\R/\big((-\infty,-1]\cup [1,\infty)\big)\, .
\]
Then
$\Sigma X$  is identified with the quotient of $\R\times X$ by the subspace 
\[
	\big((-\infty,-1]\cup [1,\infty)\big)\times X\cup \vR\times \{x_0\}\,.
\]
 The point of $\Sigma X$ which is the image of $(t,x)\in \R\times X$ will be denoted by $\l t,x \r$. The distinguished point of $\Sigma X$ is 
 \[
 	*=\l t,x_0\r=\l s,x\r\quad\text{for }|s|\geq 1\,.
 \]
The coordinates in $\Sigma X$ are defined only away from the distinguished point.
Let $$p:\Sigma X\setminus\{*\}\to X$$ and $$h:\Sigma X\setminus\{*\}\to (-1,1)$$ be the projections onto the respective coordinates.

\bigskip
For a given d-structure $\vP(X)$ on $X$, there is a variety of ``natural'' d-structures on $\Sigma{X}$ which can be defined. They are:
\begin{itemize}
	\item{The directed suspension structure $\vP_{\text{min}}(\vec{S}^1\wedge X)$, in which a path is directed if and only if it is a concatenation of a finite number of paths which are the images of  directed paths in $\vR\times X$. It is a minimal structure such that the quotient map $\vec{S}^1\times X\to \Sigma X$ is a d-map.}
	\item{The completed directed suspension structure $\vP_c(\vec{S}^1\wedge X)$, which allows "infinite" concatenations of d-maps.}
	\item{The completed structure $\vP_c({S}^1_{\mathrm{tot}}\wedge X)$, which is the total d-structure on $\Sigma X$.   }
\end{itemize}

We will consider all these cases simultaneously. We will even work in a greater generality and assume only that the d-structure satisfies the following definition:

\begin{df}\label{translatable} We say that a directed structure on $\Sigma X$ is \emph{translatable} if it contains paths
\[
	[-1,1]\ni t\mapsto \l t,x \r\in \Sigma X
\]
for all $x\in X$, and the maps
\[
	\Sigma X\ni \l t,x \r \mapsto \l \lambda t+\mu, x \r\in \Sigma X
\]
preserve the directed structure for all $\lambda,\mu\in\R$, $\lambda>0$, $-\lambda+\mu\leq-1$ and $\lambda+\mu\geq 1$.
\end{df}

All d-structures listed above are translatable. Notice that we do not assume that a translatable d-structure is induced, in any way, from a d-structure on $X$.
Without specifying any particular directed structure in the suspension, we prove the results in Section \ref{homologicznie} for $\vP_{\text{min}}(\vec{S}^1\wedge X)$, $\vP_c(\vec{S}^1\wedge X)$ and $\vP_c({S}^1_{\mathrm{tot}}\wedge X)=P(\Sigma X)$ simultaneously. We will assume that the directed structure is finitely generated (see Definition \ref{fingen}); this is true in the three listed cases thanks to Proposition \ref{p:CompletedDStructureIsFinitelyGenerated}. We will also need  another technical but natural condition, which excludes point-set pathology (see Definition \ref{exci}). 

In Section \ref{homotopijnie} we assume that all directed paths are non-decreasing along $\R$ coordinate away from the base point. This assumption allows to construct a map from the loop space to the James construction.

\section{James construction}
\label{James}

\subsection{One-sided Moore paths}
In this section we will use one-sided Moore paths, i.e., we do not assume that the paths are parametrized by the unit interval, but by an arbitrary interval $[0,e]$ for $e\geq 0$. The concatenation of such paths is strictly associative (when it is defined).

\begin{df}[One-sided Moore paths]\label{Moore-one} Let $Y$ be a directed space, $y_0\in Y$.
By $\Ps(Y,y_0)$ we denote the space of directed Moore paths $\alpha$, such that $\alpha(0)=y_0$.  Formally,
 \begin{multline*}\Ps(Y,y_0)=\{(\alpha,t^\infty_\alpha)\in \map(\R_+,Y)\times \R_+\,:\\ \,\alpha|_{[0,e]}\text{ is directed},\;\alpha(0)=y_0\,,\;\alpha(s)=\alpha(t^\infty_\alpha)\;\text{for }s>e\,\}\,.\end{multline*}
The element of $\Ps(Y,y_0)$ will be regarded as a map  $$\alpha:[0,t^\infty_\alpha]\to Y\,.$$
\end{df}

\begin{prp} The space $\Ps(Y,y_0)$ is contractible.\end{prp}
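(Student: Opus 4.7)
The plan is to construct an explicit strong deformation retraction of $\Ps(Y,y_0)$ onto the one-point subspace consisting of the constant Moore path at $y_0$ of length $0$, mimicking the standard contraction of the space of based Moore paths in the undirected setting: every path is shortened continuously until it reduces to the trivial path at $y_0$.

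Concretely, I would define
\[
	H:\Ps(Y,y_0)\times[0,1]\to \Ps(Y,y_0),\qquad
	H(\alpha,s)(t):=\alpha\bigl(\min(t,(1-s)\,t^\infty_\alpha)\bigr),
\]
with length $t^\infty_{H(\alpha,s)}:=(1-s)\,t^\infty_\alpha$. Thus $H(\alpha,s)$ is the Moore path obtained from $\alpha$ by truncating at proportion $1-s$ of its original length and extending constantly afterwards. At $s=0$ this returns $H(\alpha,0)=\alpha$, and at $s=1$ it gives the constant path at $y_0$ of length $0$, so $H$ is a contracting homotopy as soon as it is well-defined and continuous.

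The two verifications are: (i) $H(\alpha,s)\in\Ps(Y,y_0)$, i.e.\ the underlying map $[0,(1-s)t^\infty_\alpha]\to Y$ is directed; and (ii) $H$ is continuous in $(\alpha,s)$. For (i), the reparametrization of this path to $[0,1]$ equals $t\mapsto \tilde\alpha((1-s)t)$, where $\tilde\alpha:[0,1]\to Y$ denotes the reparametrization of $\alpha$. This is $\tilde\alpha$ precomposed with the continuous non-decreasing map $t\mapsto(1-s)t$, hence a d-path by the reparametrization axiom. For (ii), both the length function $\alpha\mapsto t^\infty_\alpha$ and evaluation are continuous for the topology induced from $\map(\R_+,Y)\times\R_+$, and $H$ is assembled from these by composition and a $\min$.

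The only conceptual point worth flagging is the interpretation of the reparametrization axiom: one must allow non-surjective non-decreasing maps $[0,1]\to[0,1]$, which is the standard convention in directed topology and amounts to saying that restrictions of d-paths to initial subintervals are again d-paths. Granted this, there is no real obstacle; the argument reduces to the straightforward checks above.
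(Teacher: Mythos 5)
Your contraction is exactly the paper's: the authors define $H_s(\alpha)=\alpha|_{[0,s\,t_\infty^\alpha]}$, which is your homotopy with the parameter reversed ($s\mapsto 1-s$). The proposal is correct and takes essentially the same approach, with your extra remarks on directedness of truncations and continuity being routine verifications the paper leaves implicit.
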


\begin{proof} The contracting homotopy is given by the formula $H_s(\alpha)=\alpha|_{[0,s\,t_\infty^\alpha]}$ for $s\in [0,1]$.
\end{proof}

We will use the following notation:
 $$\en:\Ps(Y,y_0)\to  Y$$
is the evaluation at the end of the path $\alpha(t^\alpha_\infty)$.

\begin{rem} If $Y$ has a total directed structure, i.e., every path is directed, then the map "$\en$" is a Serre fibration with the fiber $\Omega(Y,y_0)$. It is not true in general; it might happen that the set $\en^{-1}(y)$ (paths from $y_0$ to $y$) is empty.
\end{rem}

\subsection{Paths in the suspension}\label{stozkiL}
Fix a space $X$ and fix a translatable d-structure on $\Sigma X$. Denote by $C_+$ (resp.\ $C_-$) the upper (resp. the lower) sub-cone of $\Sigma X$, i.e.,  the image of $(-\infty,0]\times X$ (resp.\ $[0,\infty)\times X$) in $\Sigma X$.
Let
\begin{align*}&L_-=\en^{-1}(C_-)=\{\alpha\in \Ps(\Sigma X,*)\;:\;\alpha(t_\infty^\alpha)\in C_-\,\}\,,\\
&L_+=\en^{-1}(C_+)=\{\alpha\in \Ps(\Sigma X,*)\;:\;\alpha(t_\infty^\alpha)\in C_+\,\}\,,\\
&L_0=L_+\cap L_-=\en^{-1}(\{0\}\times X)=\{\alpha\in \Ps(\Sigma X,*)\;:\;\alpha(t_\infty^\alpha)\in X\times\{0\}\,\}\,.\end{align*}

For $x\in X$, $a,b\in\R$, $a<b$
let
 $$\beta_x^{a,b}:[0,b-a]\to \Sigma X$$
be the path given by
$$\beta_x^{a,b}(t)=\l a+t,x\r \,.$$
Note that, according to the Definition \ref{translatable}, such paths are directed. 
In general, we will use the letter $\beta$ to denote various versions of paths, which have $x$ fixed. Let us write $\beta(x)$ for the loop $\beta_x^{-1,1}$ and let us consider the map
$$\beta:X\to\vOmega\Sigma X\,,$$
$$x\mapsto \beta(x)=\beta_x^{-1,1}\,.$$

\subsection{A map from $J(X)$ to $\vOSX$}
Let us recall some details from \cite{CaMi}. There a convenient way of constructing a map $J(X)\to \vOSX$ is presented. The map
$\beta:X\to\vOSX$ does not preserve the base point, since the path $\beta(x_0)$ has  length 2.
Let us modify the space $X$. Let 
\[
	X'=X\vee [0,1]=X\sqcup [0,1]/{x_0\sim 1}
\]
and extend the map $\beta$
\[
\beta'(x)=\begin{cases}\beta(x)&\text{if }x\in X\\
\text{constant path of length }2t&\text{if }x=t\in[0,1]\end{cases}
\]
Let $0\in X'$ be the distinguished point in $X'$. The map $\beta'$ preserves the distinguished points; therefore, it induces a map of monoids:
$$J(\beta'):J(X')\to \vOSX\,.$$
Also we have a map
$$J(r):J(X')\to J(X)$$
induced by the retraction $r:X'\to X$. This map is a homotopy equivalence. Therefore, we obtain a map defined up to homotopy
$$J(\beta')\circ J(r)^{-1}:J(X)\to \vOSX\,.$$
Note that the James construction $J(X')$ contains the disjoint union $$\bigsqcup_{n>0} X^n$$
and the restriction of the map $J(\beta')$ to $X^n$ is the concatenation
$$\beta(x_1)*\beta(x_2)*\dots*\beta(x_n)\,.$$
Later in \S\ref{homotopijnie}, we will construct a map $$\sec:\vOSX\to J(X)$$
such that $$\sec\circ J(\beta')(x_1,x_2,\dots,x_n)=[(x_1,x_2,\dots,x_n)]\in J(X)\,.$$
It will follow that $\sec$ is a homotopy inverse of $J(\beta')\circ J(r)^{-1}$.

\section{Main homological result}
\label{homologicznie}

\subsection{The statement of the homological theorem}

In this section, we consider an arbitrary directed structure in $\Sigma X$ which is translatable (\ref{translatable}). Consider the space of directed loops
\[
	\vOSX=\en^{-1}(*)\,.
\]
The homology $H_*(\vOSX)$ with coefficients in a field has an algebra structure since  $\vOSX$ is a topological monoid. All homology groups appearing in this section will have coefficients in a field $\F$. The main result is the following Theorem \ref{homomain} and its corollaries.
We will use singular homology, which works in a general topological context but we need some general condition allowing to apply Mayer-Vietoris sequence  (see \cite[Ch 4.6, p.188]{Sp} for the definition of excisive couple). This technical issue forces a condition which we impose in the directed structure of $\Sigma X$.

\begin{df}\label{exci}
	A d-structure on $\Sigma X$ is \emph{excisive} if the pair $(L_+,L_-)$ (defined in \S\ref{stozkiL}) is an excisive couple in $\vP(\Sigma{X})_*$ in the sense of \cite[Ch 4.6, p.188]{Sp}.
\end{df}

\begin{thm}\label{homomain} Let $X$ be a space, and let $\vP(\Sigma X)$ be a directed structure on $\Sigma X$ that is excisive and translatable. Assume that $\vOSX$ is connected. Let $V=H_{>0}(X)=\H_*(X)$. Then the  map
 $$\beta_*:V\to H_*(\vOSX)$$
induces an isomorphism of algebras
$$T(V)\to H_*(\vOSX)\,,$$
where $T(V)$ is the free algebra generated by the vector space $V$. 
\end{thm}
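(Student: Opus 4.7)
The strategy is to apply Mayer--Vietoris to the excisive pair $(L_+,L_-)$, exploit the contractibility of the total path space $\vP(\Sigma X)_*=L_+\cup L_-$, and then upgrade the resulting additive recursion to an algebra isomorphism using the topological monoid structure of $\vOmega\Sigma X$. The path space $\vP(\Sigma X)_*$ is contractible via the standard shrinking $H_s(\alpha)=\alpha|_{[0,\,s\,t^\infty_\alpha]}$, so the reduced Mayer--Vietoris sequence collapses, in each degree $n\ge 1$, to an isomorphism
\[
\tilde H_n(L_0)\;\xrightarrow{\;\cong\;}\;\tilde H_n(L_+)\oplus\tilde H_n(L_-),
\]
with components induced by the inclusions $L_0\hookrightarrow L_\pm$.

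The geometric heart of the argument is to identify these three spaces up to homotopy. Using translatability, each endpoint $y=\l t,x\r$ of a path in $L_\pm$ is connected to $*$ by the directed vertical path $\beta_x^{t,1}$, varying continuously with $y$; together with the $\R$-rescalings of Definition~\ref{translatable}, this produces homotopy equivalences $L_\pm\simeq \vOmega\Sigma X$, reflecting the contractibility of $C_\pm$. For $L_0$ the same machinery yields an equivalence $\vOmega\Sigma X\times X\simeq L_0$ via $(\gamma,x)\mapsto \gamma*\beta_x^{-1,0}$; the finitely generated hypothesis is crucial here, since it guarantees that any path in $L_0$ is approximated by finite concatenations of basic pieces that split canonically as ``loop~$+$~vertical tail''. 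Writing $A=H_*(\vOmega\Sigma X)$ and $V=\tilde H_*(X)$ and applying K\"unneth, the Mayer--Vietoris identity becomes
\[
\tilde A\,\oplus\, V\,\oplus\, (\tilde A\otimes V)\;\cong\;\tilde A\,\oplus\,\tilde A,
\]
which after cancellation produces $A\cong \F\oplus (A\otimes V)$, matching the defining recursion $T(V)=\F\oplus T(V)\otimes V$ of the tensor algebra.

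To upgrade the additive statement to an algebra isomorphism, I would trace an element $(\gamma,x)$ through the composite $\vOmega\Sigma X\times X\simeq L_0\hookrightarrow L_+\simeq \vOmega\Sigma X$: the image is the concatenation $\gamma*\beta_x^{-1,0}*\beta_x^{0,1}=\gamma*\beta(x)$, so in homology the Mayer--Vietoris isomorphism $A\otimes V\to \tilde A$ is precisely right Pontryagin multiplication by $\beta_*$. Consequently the algebra homomorphism $\phi\colon T(V)\to A$ extending $\beta_*$ through the monoid structure of $\vOmega\Sigma X$ intertwines the two recursions, and an induction on degree with base $A_0=\F$ (from connectedness of $\vOmega\Sigma X$) forces $\phi$ to be an isomorphism in every degree.

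The hardest step will be the homotopy identifications of $L_\pm$ and $L_0$. The evaluation maps are not genuine fibrations in the directed setting, so these equivalences must be constructed by hand from the translation maps of Definition~\ref{translatable}; the finitely-generated hypothesis is precisely what allows one to reduce statements about arbitrary directed paths to statements about finite concatenations of vertical and basic pieces, where the deformations can be written down explicitly. Once these identifications are in place, the rest of the proof is essentially bookkeeping.
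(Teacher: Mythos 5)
Your proposal follows essentially the same route as the paper: Mayer--Vietoris for the excisive pair $(L_+,L_-)$ inside the contractible Moore path space, the homotopy identifications $L_\pm\simeq\vOmega\Sigma X$ and $L_0\simeq X\times\vOmega\Sigma X$ via $(x,\gamma)\mapsto\gamma*\beta_x^{-1,0}$, the observation that the resulting map $V\otimes A\to\widetilde A$ is right multiplication by $\beta_*$, and the tensor-algebra recursion. One correction: the finitely generated hypothesis is neither assumed in the theorem nor needed for the identification of $L_0$ --- translatability alone supplies the explicit shrinking maps $\phi^{\pm}_t(\l s,x\r)=\l(t+1)s\mp t,x\r$, and the homotopy inverse of $(x,\gamma)\mapsto\gamma*\beta_x^{-1,0}$ is simply $\alpha\mapsto(\en(\alpha),\phi^-\circ\alpha)$, with no density or approximation argument required.
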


\begin{cor} \label{cormain} Under assumptions as above, the extension of the inclusion
\[
	\beta:X\hookrightarrow\vOSX
\]
to the James construction
\[
	J(\beta'):J(X')\hookrightarrow\vOSX
\]
is a weak homotopy equivalence.
In particular, the homotopy type of $\vOSX$ does not depend on the directed structure on $\Sigma X$.
\end{cor}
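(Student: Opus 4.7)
The plan is to deduce Corollary~\ref{cormain} from Theorem~\ref{homomain} by showing that $J(\beta')$ is an integral homology isomorphism between path-connected H-spaces and then invoking a Whitehead-type theorem. Since $X$ is connected, so is $X'=X\vee[0,1]$ and hence $J(X')$; the target $\vOSX$ is connected by Theorem~\ref{spojnosc} together with the finite-generation assumption that is standing in Section~\ref{homologicznie}. Both $J(X')$ and $\vOSX$ are topological monoids, in particular nilpotent spaces.

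First I would compare the two sides on homology with coefficients in an arbitrary field $\F$. By the classical Bott--Samelson theorem, the inclusion $X'\hookrightarrow J(X')$ extends to an algebra isomorphism $T(\H_*(X';\F))\cong H_*(J(X');\F)$. Because the retraction $r\colon X'\to X$ is a homotopy equivalence, $\H_*(X';\F)$ is canonically $V=\H_*(X;\F)$. By the construction of $\beta'$ recalled in Section~\ref{James}, the composite $X\hookrightarrow X'\hookrightarrow J(X')\xrightarrow{J(\beta')}\vOSX$ equals $\beta$. Since $J(\beta')$ is a monoid homomorphism, $J(\beta')_*$ is an algebra map on Pontryagin algebras, and is therefore determined by its restriction to the generating subspace $V$, where it equals $\beta_*$. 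Theorem~\ref{homomain} then identifies $J(\beta')_*$ with the algebra isomorphism $T(V)\cong H_*(\vOSX;\F)$. Hence $J(\beta')$ is an $\F$-homology isomorphism for every field $\F$.

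Next I would bootstrap to integer coefficients. Let $C$ denote the algebraic mapping cone of the induced chain map on singular complexes; the previous step gives $H_*(C;\F)=0$ for every field $\F$. Universal coefficients with $\F=\F_p$ shows that $H_n(C)/pH_n(C)=0$ and that $H_{n-1}(C)$ has no $p$-torsion; taking all primes, every $H_n(C)$ is both divisible and torsion-free, hence a $\Q$-vector space, and then $H_n(C;\Q)=H_n(C)\otimes\Q=0$ forces $H_n(C)=0$. Thus $J(\beta')$ is an integral homology isomorphism between path-connected H-spaces, and a Whitehead-type theorem for nilpotent spaces (applied after CW-approximation if necessary) yields the desired weak homotopy equivalence. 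The last sentence of Corollary~\ref{cormain} is then formal, since $J(X')$ depends only on $X$: for any two translatable, excisive, finitely generated d-structures on $\Sigma X$ the corresponding spaces $\vOSX$ are both weakly equivalent to the same $J(X')$, hence to each other.

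The step I expect to be the main obstacle is the precise identification in the second paragraph of $J(\beta')_*$ with the isomorphism produced by Theorem~\ref{homomain}. The subtlety is caused by the basepoint issue that forced the introduction of $X'$ in the first place: $\beta(x_0)$ is a loop of length~$2$ rather than the constant loop, so one has to check carefully that the wedge summand $[0,1]\subset X'$ contributes only the unit of the tensor algebra under $J(\beta')_*$ and does not perturb the generating map $\beta_*$ on the $X$-part of $V$. Once this bookkeeping is done cleanly, the compatibility of $J(\beta')$ with the Pontryagin product does the rest of the work.
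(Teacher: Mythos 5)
Your proposal is correct and follows essentially the same route as the paper: reduce to field coefficients ($\F_p$ for all $p$ and $\Q$), identify $H_*(J(X');\F)$ with $T(V)$ via James/Bott--Samelson, observe that the Pontryagin-algebra map $J(\beta')_*$ restricts to $\beta_*$ on generators so that Theorem~\ref{homomain} makes it an isomorphism, and conclude by a Whitehead-type theorem for simple spaces (topological monoids have abelian fundamental group acting trivially). The paper's version is just a terser rendering of the same argument, leaving implicit the universal-coefficients bootstrap and the $X'$-versus-$X$ bookkeeping that you spell out.
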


It follows that:
\begin{cor}Under the assumptions as above, the  inclusion
$$\vOmega\Sigma X\to \Omega\Sigma X$$
is a homotopy equivalence.\end{cor}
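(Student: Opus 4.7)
The plan is to derive this directly from Corollary \ref{cormain} together with the classical theorem of James (recalled in the introduction), via the two-out-of-three property for weak homotopy equivalences.

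I would consider the factorisation
\[
	J(X')\xrightarrow{J(\beta')}\vOSX\hookrightarrow\Omega\Sigma X.
\]
By construction (see \S\ref{James}), a word $(x_1,\dots,x_n)\in (X')^n\subseteq J(X')$ is sent along this composition to the concatenated loop $\beta'(x_1)*\cdots*\beta'(x_n)$ viewed as an undirected loop; the two maps in the composition differ only by whether one records the directed structure on the resulting loops. In particular, the composition is nothing but the classical James map $J(X')\to\Omega\Sigma X$, so the triangle commutes on the nose.

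Next I would verify that this composition is a weak homotopy equivalence. Since $X$ is path-connected (as in Theorem \ref{twierdzenieglowne}), so is $X'=X\vee[0,1]$; moreover, the retraction $r:X'\to X$ is a based homotopy equivalence, so it induces an equivalence $\Omega\Sigma X'\simeq\Omega\Sigma X$. The classical James theorem \cite{Ja} applied to $X'$ then gives that the composition $J(X')\to\Omega\Sigma X$ is a weak homotopy equivalence. The first factor $J(\beta')$ is itself a weak homotopy equivalence by Corollary \ref{cormain}, whose hypothesis that $\vOSX$ be connected is supplied by Theorem \ref{spojnosc} for the finitely generated directed structure under consideration. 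Two-out-of-three applied to the above factorisation then yields the desired weak equivalence $\vOSX\hookrightarrow\Omega\Sigma X$.

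The only point that really has to be checked is the on-the-nose identification of the composition with the classical James map, which is immediate from the explicit monoid-map description in \S\ref{James} (the directed and undirected James maps share the same underlying continuous map). I do not anticipate a serious obstacle: all of the substantive input, namely Corollary \ref{cormain} for the directed James theorem, James's original result for the undirected one, and Theorem \ref{spojnosc} for the connectedness assumption, is already in place.
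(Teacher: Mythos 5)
Your proposal is correct and is exactly the argument the paper intends: the corollary is stated with no separate proof (``It follows that:''), and the introduction makes explicit that the derivation is the two-out-of-three argument combining Corollary \ref{cormain} with James's classical theorem for $J(X')\to\Omega\Sigma X$. Your spelled-out verification that the composite agrees with the classical James map, and that connectedness is supplied by Theorem \ref{spojnosc}, fills in precisely the details the paper leaves implicit.
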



Finally, we will show that the assumptions on the d-structure on $\Sigma{X}$ are satisfied for directed suspensions of cubical complexes.

\begin{prp}
	Assume that $X$ is a geometric realization of a cubical set $B$, and the directed structure is the directed suspension structure (minimal or completed). Then $\vP(\vSigma X)$ is both translatable and excisive. If $X$ is connected, then $\vOmega \vSigma X$ is connected.
\end{prp}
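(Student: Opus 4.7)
The three assertions can be addressed essentially independently. \emph{Translatability} is immediate from the construction of $\vSigma X$ as a quotient of $\vR\times X$: the paths $t\mapsto \l t,x\r$ are projections of the d-paths $t\mapsto (t,x)$ on $\vR\times X$, and the affine maps $t\mapsto \lambda t+\mu$ with $\lambda>0$ are d-maps of $\vR$, so the induced self-map of $\vR\times X$ is a d-map; the inequalities $-\lambda+\mu\leq -1$ and $\lambda+\mu\geq 1$ guarantee that it descends to $\vSigma X$. \emph{Connectedness} follows from Theorem \ref{spojnosc} applied to $\vec{S}^1$ and $X$: both spaces are path connected, and the directed structure on $\vec{S}^1\wedge X$ is finitely generated (trivially in the minimal case, and by Proposition \ref{p:CompletedDStructureIsFinitelyGenerated} in the completed case); hence $\vOmega\vSigma X$ is connected.

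The real content is \emph{excisiveness}, where the cubical hypothesis is essential. Equip $\vec{S}^1$ with the cubical structure having two $0$-cells $\{*,0\}$ and two $1$-cells corresponding to the arcs $[-1,0]$ and $[0,1]$; the inherited cubical structure on $\vSigma X=\vec{S}^1\wedge X$ has $C_+$, $C_-$ and $C_+\cap C_-$ as subcomplexes. Apply Proposition \ref{cubicalNeighborhood} with $p=*$, using the single retraction $R_1$ of that proposition for each of the three choices $L\in\{C_+,C_-,C_+\cap C_-\}$. This yields sets $A_+=R_1^{-1}(C_+)$ and $A_-=R_1^{-1}(C_-)$ whose interiors contain $C_+$ and $C_-$ respectively, which automatically satisfy $A_+\cap A_-=R_1^{-1}(C_+\cap C_-)$, and such that the inclusions
\[
	L_\pm\hookrightarrow \vP(\vSigma X)_*^{A_\pm},\qquad L_+\cap L_-\hookrightarrow \vP(\vSigma X)_*^{A_+\cap A_-}
\]
are homotopy equivalences. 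Since $C_+\cup C_-=\vSigma X$, also $A_+^\circ\cup A_-^\circ=\vSigma X$, so $\{\en^{-1}(A_+^\circ),\en^{-1}(A_-^\circ)\}$ is an open cover of $\vP(\vSigma X)_*=L_+\cup L_-$ and hence an excisive couple in the sense of Spanier. The three homotopy equivalences above induce a morphism between the Mayer--Vietoris long exact sequences of the two couples whose vertical arrows are isomorphisms, and the five lemma then shows that $(L_+,L_-)$ is also excisive.

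The principal difficulty is the last compatibility step: one must verify that the homotopies coming from Proposition \ref{cubicalNeighborhood} for the three choices of $L$ assemble into an honest morphism of Mayer--Vietoris sequences. This should be manageable precisely because the deformation $R_t$ is a single continuous family of self-maps of $\vSigma X$, independent of $L$, which simultaneously retracts $\vP(\vSigma X)_*^{A_+}$, $\vP(\vSigma X)_*^{A_-}$ and $\vP(\vSigma X)_*^{A_+\cap A_-}$ onto $L_+$, $L_-$ and $L_+\cap L_-$; care is nonetheless required to confirm that the resulting retractions genuinely preserve the subspaces $\en^{-1}(A_\pm)$ throughout the homotopy.
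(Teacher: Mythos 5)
Your proposal is correct and follows essentially the same route as the paper: translatability read off from the quotient description, connectedness from Theorem \ref{spojnosc} together with Proposition \ref{p:CompletedDStructureIsFinitelyGenerated}, and excisiveness by presenting $(\vSigma X,C_+,C_-)$ as the realization of a cubical triple and invoking Proposition \ref{cubicalNeighborhood} to replace $L_\pm$ by preimages of neighbourhoods $A_\pm$ whose interiors cover, then transferring excisiveness back along the homotopy equivalences. You are in fact somewhat more explicit than the paper about the role of the intersection $C_+\cap C_-$ and the comparison of the two couples, which the paper compresses into ``this implies that $(L_-,L_+)$ is also excisive.''
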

\begin{proof}
	As before, the translatability is clear. Let $E_-$, $E_+$ be $1$--dimensional cubes with the single non-degenerate vertices $i_-$ and $i_+$ respectively, and let $E=E_{-}\cup_{d^1_1(i_-)=d^0_1(i_+)} E_+$. There is a d-homeomorphism of triples $(|E|,|E_-|,|E_+|)\cong ([-1,1],[-1,0],[0,1])$. As a consequence, the geometric realization of 
\[
	K=E\otimes B/ (E\otimes\{b_0\} \cup \{v_-,v_+\}\otimes B)
\]
is d-homeomorphic to $\Sigma{X}$. Let $K_-$ and $K_+$ be the images of $E_-\otimes B$ and $E_+\otimes B$. There is a d-homeomorphism of triples
\[
	(\Sigma X,C_+,C_-)\cong (|K|,|K_+|,|K_-|).
\]
By \ref{cubicalNeighborhood}, there are subsets $A_-,A_+\subseteq |K|$ that contain $|K_-|$ and $|K_+|$ in their interiors, respectively, and
\[
	L_-\cong \en^{-1}(|K_-|)\subseteq \en^{-1}(A_-),\quad L_+\cong \en^{-1}(|K_+|)\subseteq \en^{-1}(A_+)
\] 
are homotopy equivalences. The pair $(\en^{-1}(A_-),\en^{-1}(A_+))$ is excisive; this implies that $(L_-,L_+)$ is also excisive.

The last essertion follows from Theorem \ref{spojnosc}.
\end{proof}

\subsection{Basic homotopy equivalences}
Fix a d-structure on $\Sigma X$ that is translatable. Let $$\phi^-:\Sigma X\to \Sigma X$$
be the map induced defined by
$$\l s,x\r\mapsto \l 2s-1,x\r\,.$$
This map shrinks $C_-$ to the distinguished point.
The map $\phi^-$ is d-homotopic (i.e., homotopic via d-maps)  to the identity: the homotopy is defined for $t\in[0,1]$:
\begin{equation}\label{phit}\phi^-_t(\l s,x\r)=\l(t+1)s-t,x\r\,.\end{equation}
The map $\phi^-_t$ shrinks the image of $(-\infty,\tfrac{t-1}{t+1}]\times X$ to the distinguished point.

The map $\phi^-$ induces a map of the space of the directed paths. For $\alpha\in L_-$ we have $\phi^-\circ\alpha\in \vOSX$. Therefore

\begin{prp}\label{prop1}The inclusion $i_-:\vOSX\to L_-$ is a homotopy equivalence. Its inverse is given by
$$\Phi^-(\alpha)=\phi^-\circ \alpha\,.\qed$$\end{prp}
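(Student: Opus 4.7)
The plan is to promote the pointwise homotopy $\phi^-_t$ from (\ref{phit}) to a strong deformation retraction of $L_-$ onto $\vOSX$. Concretely, I would define
\[
	H:[0,1]\times L_-\to L_-,\qquad H(t,\alpha):=\phi^-_t\circ\alpha,
\]
and check that $H_0=\mathrm{id}_{L_-}$, $H_1=i_-\circ\Phi^-$, and that $H$ restricts to a self-homotopy on $\vOSX$. These identities at the endpoints are immediate from the definitions, and the restriction to $\vOSX$ simultaneously yields $i_-\circ\Phi^-\simeq \mathrm{id}_{L_-}$ and $\Phi^-\circ i_-\simeq \mathrm{id}_{\vOSX}$, proving that $i_-$ and $\Phi^-$ are mutually inverse homotopy equivalences.

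The only substantive point to verify is that $H$ takes values where claimed. First, each $\phi^-_t$ is a d-endomorphism of $\Sigma X$: it has the form $\l s,x\r\mapsto \l \lambda s+\mu,x\r$ with $\lambda=t+1$ and $\mu=-t$, which satisfy $\lambda>0$, $-\lambda+\mu=-2t-1\leq -1$ and $\lambda+\mu=1\geq 1$, so Definition \ref{translatable} applies. Hence $\phi^-_t\circ\alpha$ is a d-path whenever $\alpha$ is. Second, if $\alpha\in L_-$ has endpoint $\alpha(t^\alpha_\infty)=\l s,x\r$ with $s\in[-1,0]$, then $\phi^-_t(\alpha(t^\alpha_\infty))=\l (t+1)s-t,x\r$ and $(t+1)s-t\leq 0$, so the endpoint remains in $C_-$; if the endpoint is $*$ it is fixed by $\phi^-_t$. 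This also shows the restriction to $\vOSX$: a loop at $*$ is sent to another loop at $*$, because $\phi^-_t$ fixes the basepoint.

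Continuity of $H$ in the Moore-path topology is routine, since the Moore length $t^\alpha_\infty$ is preserved under left-composition and $(t,\alpha,s)\mapsto \phi^-_t(\alpha(s))$ is jointly continuous. There is essentially no obstacle; the translatability condition is tailored exactly so that the affine reparametrizations appearing in $\phi^-_t$ fall within the allowed range, and the only thing one has to be careful about is the direction of the inequalities $-\lambda+\mu\leq -1$ and $\lambda+\mu\geq 1$ throughout the homotopy, which holds because the interval $t\in[0,1]$ keeps $\lambda=t+1$ and $\mu=-t$ in the admissible region.
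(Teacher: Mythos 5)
Your argument is exactly the one the paper intends: the proposition is stated with an immediate \qed precisely because the d-homotopy $\phi^-_t$ of (\ref{phit}) post-composes to give the deformation $H(t,\alpha)=\phi^-_t\circ\alpha$, and your verification that $\lambda=t+1$, $\mu=-t$ stay in the admissible range of Definition \ref{translatable} and that endpoints remain in $C_-$ (landing at $*$ when $t=1$) is the content the paper leaves implicit. Correct, and essentially the same approach.
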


Similarly we have a map
 $$\phi^+=\phi^+_1:\Sigma X\to \Sigma X$$
$$\phi^+(\l s,x\r)= \l 2s+1,x\r\,,$$
which shrinks $C_+$ to the distinguished point. The homotopy between $\phi^+$ and the identity is given by the formula
$$\phi^+_t(\l s,x\r)= \l(t+1)s+t,x\r\,.$$
The map $\phi^+_t$ shrinks the image of $[\tfrac{1-t}{t+1},\infty)\times X$ to the distinguished point.

We obtain

\begin{prp}\label{prop2}The inclusion $i_+:\vOSX\to L_+$ is a homotopy equivalence. The inverse is given by
$$\Phi^+(\alpha)=\phi^+\circ \alpha\,.$$\end{prp}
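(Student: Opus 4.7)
The plan is to mirror the argument for Proposition \ref{prop1} on the ``other side''. First I would check that the assignment $\Phi^+$ does land in $\vOSX$: for $\alpha\in L_+$ the endpoint $\alpha(t^\alpha_\infty)$ lies in $C_+$, i.e.\ has the form $\l s,x\r$ with $s\geq 0$, so $\phi^+(\l s,x\r)=\l 2s+1,x\r=*$ because $2s+1\geq 1$. Thus $\phi^+\circ\alpha$ is a loop at $*$. That it is a d-path follows from the translatability assumption (Definition \ref{translatable}) applied with $\lambda=2$, $\mu=1$, which indeed satisfies $-\lambda+\mu=-1\leq -1$ and $\lambda+\mu=3\geq 1$, so $\phi^+$ preserves the directed structure.

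Next I would verify that the homotopies $\phi^+_t(\l s,x\r)=\l(t+1)s+t,x\r$ are directed maps for every $t\in[0,1]$: translatability is applied with $\lambda=t+1>0$, $\mu=t$, and the inequalities $-(t+1)+t=-1\leq -1$ and $(t+1)+t=2t+1\geq 1$ hold for $t\in[0,1]$. At $t=0$ one recovers the identity; at $t=1$ one recovers $\phi^+$.

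Then I would argue that the composed homotopy $H_t(\alpha)=\phi^+_t\circ\alpha$ works in both directions. For $\alpha\in\vOSX$ the endpoint is fixed at $*$, which is preserved by every $\phi^+_t$, so $H_t$ stays inside $\vOSX$ and yields $\Phi^+\circ i_+\simeq\mathrm{id}_{\vOSX}$. For $\alpha\in L_+$ we need the endpoint to remain in $C_+$ throughout the homotopy; this is immediate since $\phi^+_t$ sends $\l s,x\r$ with $s\geq 0$ to $\l(t+1)s+t,x\r$, which either has non-negative $\R$-coordinate or has already collapsed to $*\in C_+$. Hence $i_+\circ\Phi^+\simeq\mathrm{id}_{L_+}$.

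None of the steps is an obstacle; the only thing to be careful about is that the homotopy $\phi^+_t$ stays inside the class of d-maps (which is exactly the content of translatability with the quoted parameters) and that the relevant endpoints remain in $C_+$ or at $*$ throughout. Since each verification is a direct dual of the one for Proposition \ref{prop1}, the proposition follows.
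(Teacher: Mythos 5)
Your proof is correct and follows exactly the paper's (largely implicit) argument: the paper likewise obtains Proposition \ref{prop2} from the observation that $\phi^+_t$ is a d-homotopy from the identity to $\phi^+$ preserving both $*$ and $C_+$, mirroring Proposition \ref{prop1}. Note only that you silently use the convention $C_+=\{\l s,x\r : s\geq 0\}$, which is the one consistent with the claim that $\phi^+$ shrinks $C_+$ (the parenthetical ``resp.''\ in the paper's definition of $C_\pm$ appears to be swapped).
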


\begin{prp}\label{prop3}The map $F:X\times \vOSX\to L_0$  given by
$$F(x,\alpha)=\alpha*\beta_x^{-1,0}\,.$$ is a homotopy equivalence.
The inverse is given by
$$G(\alpha)=(\en(\alpha),\phi^-\circ \alpha)\in (\{0\}\times X)\times \vOSX\simeq X\times \vOSX \,.$$
\end{prp}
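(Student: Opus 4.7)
My plan is to prove that $F$ and $G$ are mutually inverse homotopy equivalences by writing down explicit homotopies between the identities and the compositions $F\circ G$, $G\circ F$, using the family $\phi^-_t$ of d-self-maps of $\Sigma X$ from Proposition \ref{prop1} together with the Moore-path formalism (which tolerates paths of varying length, including length zero). The key observation is that $\phi^-_t$ is itself a d-map at every time $t\in[0,1]$ since its formula $\l s,x\r\mapsto\l (t+1)s-t,x\r$ satisfies the translatability conditions with $\lambda=t+1$, $\mu=-t$; this is what will keep the construction inside the relevant d-path spaces.

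First I would verify well-definedness. For $(x,\alpha)\in X\times\vOSX$ the path $\beta_x^{-1,0}$ is directed by translatability, hence $F(x,\alpha)=\alpha*\beta_x^{-1,0}$ is a d-path ending at $\l 0,x\r\in L_0$. For $\alpha\in L_0$, the endpoint $\en(\alpha)\in\{0\}\times X\cong X$ supplies the first coordinate of $G(\alpha)$, while $\phi^-\circ\alpha$ sends both endpoints of $\alpha$ to $*$ (because $\phi^-(\l 0,x\r)=\l -1,x\r=*$), so lies in $\vOSX$.

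For $F\circ G\simeq\mathrm{id}_{L_0}$ I would propose the homotopy
\[
	H_t(\alpha)=(\phi^-_t\circ\alpha)*\beta_{p(\en(\alpha))}^{-t,0},\qquad t\in[0,1].
\]
At $t=0$ the appended piece has length zero and $\phi^-_0=\mathrm{id}$, giving $H_0(\alpha)=\alpha$; at $t=1$ we recover $F(G(\alpha))$. The endpoints align at every $t$ since $\phi^-_t(\l 0,x\r)=\l -t,x\r$ is exactly the starting point of $\beta_x^{-t,0}$. For $G\circ F\simeq\mathrm{id}_{X\times\vOSX}$, the direct computation $\phi^-(\l -1+s,x\r)=\l 2s-3,x\r=*$ for $s\in[0,1]$ shows that $\phi^-\circ\beta_x^{-1,0}$ is the constant Moore path $c_*^1$ at $*$ of length $1$, so $G(F(x,\alpha))=(x,(\phi^-\circ\alpha)*c_*^1)$. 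I would then use
\[
	K_t(x,\alpha)=\bigl(x,(\phi^-_t\circ\alpha)*c_*^t\bigr),
\]
which interpolates from $(x,\alpha)$ at $t=0$ to $G(F(x,\alpha))$ at $t=1$ and stays inside $X\times\vOSX$ because $\phi^-_t$ fixes $*$ and $\alpha$ is a loop at $*$.

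The main obstacle I anticipate is the bookkeeping at the degenerate endpoint $t=0$, where the appended Moore paths have length zero. Conceptually this is harmless — concatenation with a length-zero path returns the original, and Moore-path concatenation is continuous in both arguments — but one must take care to work within the formal setting of Definition \ref{Moore-one} and to check that $t\mapsto\beta_x^{-t,0}$ and $t\mapsto c_*^t$ are continuous families of Moore paths (with continuously varying lengths). The remaining verifications, namely well-definedness, directedness of every intermediate path, and matching endpoints for the concatenations, are routine consequences of translatability and the explicit form of $\phi^-_t$.
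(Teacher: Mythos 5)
Your proof is correct and follows essentially the same route as the paper: both exhibit explicit homotopies built from the translatable family $\phi^-_t$ together with a shrinking appended arc (your $\beta_x^{-t,0}$ and $c_*^t$ versus the paper's $\beta_x^{(t-1)/(1+t),0}$ and $\beta_x^{-1,(t-1)/(1+t)}$), relying on Moore-path concatenation to handle the length-zero endpoint at $t=0$. In fact your parametrization of the $F\circ G$ homotopy is the cleaner one, since the appended arc $\beta_{p(\en(\alpha))}^{-t,0}$ visibly starts at $\phi^-_t(\l 0,x\r)=\l -t,x\r$ for every $t$, which is exactly the endpoint-matching the argument needs.
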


\begin{proof} The composition $$GF((x,\alpha))=G(\alpha*\beta_x^{0,1})=
(x,\phi^-\circ(\alpha*\beta_x^{-1,0}))$$ is joined with the identity by the homotopy
$$H_t((x,\alpha))=\big(x,\phi^-_t\circ(\alpha*\beta_
x^{-1,(t-1)/(1+t)})\big)\,,$$
$$H_0(x,\alpha)=(x,\alpha)\,,\quad H_1(x,\alpha)=GF(x,\alpha)\,,$$
where $\phi^-_t$ is given by (\ref{phit}).

The opposite composition is equal to
$$FG(\alpha)=F(p(\alpha),\phi^-\circ \alpha)=(\phi^-\circ\alpha)*\beta_x^{-1,0}\,.$$
The homotopy to the identity is given by
$$H'_t(\alpha)=(\phi^-_t\circ\alpha)*\beta_x^{(t-1)/(1+t),0}\,,$$
$$H'_0(\alpha)=\alpha\,,\quad H'_1(\alpha)=FG(\alpha)\,.\qedhere$$
\end{proof}

\subsection{Proof of the main homological theorem}

Proof of Theorem \ref{homomain} is a modification of the proof  in the classical situation
\cite{Ja} and \cite[\S5.3]{CaMi}.\medskip

\begin{proof}  We consider the decomposition
$$\Ps(\vOSX,*) =L_-\cup_{L_0}L_+\,.$$ 
We may apply Mayer-Vietoris exact sequence since we know, by assumption, that $(L_-,L_+)$ is an excisive couple. 
Let $j_\pm:L_\pm\to\Ps(\Sigma X,*)$ be the inclusion. Since the space $\Ps(\SX,*)$ is contractible, the Mayer-Vietoris exact sequence for the reduced homology  gives us an isomorphism
$$\H_*(L_0){\xrightarrow{\;(j_-,j_+)\;}} \H_*(L_-)\oplus \H_*(L_+)\,.$$
Due to Propositions \ref{prop1}-\ref{prop3}, we obtain an isomorphism
$$\H_*(X\times \vOSX)\stackrel\simeq\longrightarrow \H_*(\vOSX)\oplus \H_*(\vOSX)\,.$$
The key point is to analyze the isomorphism map in terms of the algebra structure of $H_*(\vOSX)$.
We have a commutative diagram
\begin{equation}\label{dia}
	\begin{diagram}
		\node{\H_*(L_0)}
			\arrow{e,t}{(j_-,j_+)_*}
		\node{\H_*(L_-)\oplus \H_*(L_+)}
			\arrow{s,r}{(\Phi_-,\Phi_+)_*}
	\\
		\node{\H_*(X\times \vOSX)}
			\arrow{e,t}{\simeq}
			\arrow{n,l}{F_*}
		\node{\H_*(\vOSX)\oplus \H_*(\vOSX)}
	\end{diagram}	
\end{equation}

The map
$$\Phi_-\circ j_-\circ F:X\times \vOSX\to \vOSX$$
$$\alpha\mapsto \phi_-\circ(\alpha*\beta_x^{-1,0})=(\phi_-\circ\alpha)*(\phi_-\circ \beta_x^{-1,0})=(\phi_-\circ\alpha)*const$$
is homotopic to the projection on $\vOSX$. On the other hand
$$\Phi_+\circ j_+\circ F:X\times \vOSX\to \vOSX$$
$$\alpha\mapsto \phi_+\circ(\alpha*\beta_x^{-1,0})=(\phi_+\circ\alpha)*(\phi_+\circ \beta_x^{-1,0})=(\phi_+\circ\alpha)*(\phi_+\circ\beta_x^{-1,0})\,.$$
and after a reparametrisation of $\beta$ is equal to
$$(\phi_+\circ\alpha)*\beta_x^{-1,1}=(\phi_+\circ\alpha)*\beta(x)\,.$$
This map is homotopic to
$$\alpha\mapsto \alpha*\beta(x)\,.$$
Denote by $A$ the algebra $H_*(\vOSX)=H_*(\vOSX;\F)$. This is a graded algebra with $A_0=\F$, by the assumption that $\vxOmega$ is connected. Thus,
$$A=\F\oplus\widetilde A\,,\quad \widetilde A=\H_*(\vOSX)=H_{>0}(\vOSX)\,.$$
The reduced homology of $X\times \vOSX$ can be decomposed as
$$\H_*(X\times \vOSX)=\H_*(X)\oplus \H_*(\vOSX)\oplus\H_*(X)\otimes H_*(\vOSX)=V\oplus \widetilde A\oplus V\otimes \widetilde A\,.$$
The bottom isomorphism of the diagram \ref{dia} is of the form
$$V\oplus \widetilde A\oplus V\otimes \widetilde A\to \widetilde A\oplus \widetilde A\,,$$
$$(v,a,w\otimes b)\mapsto (a,\beta_*(v)+\beta_*(w)\cdot b)\,.$$
We subtract one summand $\widetilde A$ from the above mapping and since $V\oplus V\otimes\widetilde A=V\otimes A$  we obtain an isomorphism
$$V\otimes A\to \widetilde A\,,$$
$$w\otimes b\mapsto \beta_*(w)\cdot b\,.$$
 This property characterizes the tensor algebra $T(V)$, see \cite[Prop.~5.3.2]{CaMi}.\end{proof}

\subsection{Proof of Corollary \ref{cormain}}
The spaces $JX$ and $\vOSX$ are topological monoids, therefore their fundamental groups are commutative. A map between such spaces is a weak homotopy equivalence if and only if it induces an isomorphism of homology groups. It is enough to check that the induced maps are isomorphisms for homologies with coefficients in the fields $\Z_p$ (for any prime $p$) and in $\Q$. By \cite{Ja} the homology of $J(X)$ is the free tensor algebra generated by $V=H^{>0}(X)$. The map 
$$X\longrightarrow JX'\stackrel{J(\beta')}\longrightarrow \vOSX$$
gives rise to the map of homology:
$$V\longrightarrow H_*(JX)\simeq T(V)\longrightarrow H^*(\vOSX)\,.$$
By Theorem \ref{homomain} the last map is an isomorphism.
\qed
\section{Topological construction}
\label{homotopijnie}
\subsection{Increasing paths in the directed suspension}

We will concentrate on \emph{the directed suspension} of a pointed d-space $X$, i.e., we assume that $\vP(\vSigma X)$ is either $\vP_{\mathrm{min}}(\vec{S}^1\wedge X)$ or $\vP_{\mathrm{c}}(\vec{S}^1\wedge X)$. We do not assume that $X$ is connected but it is important that, for every directed path $\alpha\in \vP(\vSigma X)$, the composition $h\circ \alpha$ is non-decreasing on any interval on which it is well-defined. To emphasize this, we will write $\vSigma X$ rather than $\Sigma X$.

\begin{df}
	A path $\alpha\in \vP(\vSigma X)$ is \emph{strictly increasing} if, for every $0\leq s <t \leq 1$ one of the following conditions are satisfied:
	\begin{itemize}
		\item{there exists $s<u<t$ such that $\alpha(u)=*$,}
		\item{the map $(h\circ\alpha)|_{\left(s,t\right)}$ is strictly increasing.}
	\end{itemize}
	Let $\vP_{inc}(\vSigma X)\subseteq \vP(\vSigma X)$ be the space of strictly increasing paths, and let
	\[
		\vixOmega:=\{\alpha\in\vP_{inc}(\vSigma{X}):\;  \alpha(t_0^\alpha)=\alpha(t_\infty^\alpha)=*\}
	\]
	 be the space of strictly increasing loops.
\end{df}

\begin{prp}
	The inclusion $\vixOmega\subseteq \vxOmega$ is a homotopy equivalence.
\end{prp}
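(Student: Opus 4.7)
The plan is to construct a homotopy inverse $r: \vxOmega \to \vixOmega$ to the inclusion $\iota$, together with a homotopy $H: [0,1] \times \vxOmega \to \vxOmega$ between $\mathrm{id}$ and $\iota \circ r$. The construction proceeds excursion-by-excursion, exploiting translatability of the d-structure on $\vSigma X$.

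Given $\alpha \in \vxOmega$, the set $\alpha^{-1}(\vSigma X \setminus \{*\})$ is open in $[0, t^\infty_\alpha]$ and decomposes into maximal open intervals, which we call \emph{excursions}. On each excursion $(a,b)$, the restriction $\alpha|_{(a,b)}$ lifts continuously to $(g, \gamma): (a, b) \to \R \times X$, with $g = h \circ \alpha$ non-decreasing and with one-sided limits $g(a^+), g(b^-)$. Define $H_s(\alpha)$ on this excursion as the projection to $\vSigma X$ of $\left((1-s)g(t) + s\, \ell_{(a,b)}(t),\; \gamma(t)\right)$, where $\ell_{(a,b)}$ is the linear interpolation from $g(a^+)$ to $g(b^-)$; extend $H_s(\alpha)$ to equal $*$ on $\alpha^{-1}(*)$. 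A convex combination of non-decreasing functions is non-decreasing, so $H_s(\alpha)$ is a directed loop, and continuity across excursion boundaries holds because $\ell_{(a,b)}$ matches $g$ at the endpoints.

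At $s = 1$, a non-plateau excursion (one with $g(a^+) < g(b^-)$) yields a strictly linearly increasing $h$-component, as required. The principal obstacle is \emph{plateau excursions}, where $g \equiv c \in (-1, 1)$ throughout $(a, b)$; then $\gamma(t) \neq x_0$ on $(a, b)$ with $\gamma(a^+) = \gamma(b^-) = x_0$, i.e., $\gamma$ is a nontrivial based loop in $X$, and convex interpolation alone leaves $\alpha$ unchanged there. We remedy this by using translatability to insert a fresh visit to $*$ in the middle of each plateau excursion: shift the $h$-coordinate by a non-decreasing amount $s\,\psi_{(a,b)}(t)$ that exceeds $1 - c$ on the central subinterval of $(a,b)$ (collapsing it to $*$), and absorb the resulting endpoint discrepancy through a reparametrization of the form $\l t, x\r \mapsto \l \lambda t + \mu, x\r$ provided by Definition \ref{translatable}. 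The net effect is to split each plateau excursion into two strictly increasing sub-excursions separated by a new visit to $*$.

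The chief technical difficulty is continuity of $H$ in $\alpha$: small perturbations in the compact-open topology can change the excursion decomposition discontinuously, so the auxiliary data $\ell_{(a,b)}$ and $\psi_{(a,b)}$ must be chosen as continuous functionals of $\alpha$ rather than piecewise excursion-dependent formulae. One way to achieve this is to express these functionals as integrals weighted by the local modulus of monotonicity of $h \circ \alpha$, so that plateau amplitude and location vary continuously with $\alpha$. Finally, the homotopy $r \circ \iota \simeq \mathrm{id}_{\vixOmega}$ is obtained by applying the same construction to loops already in $\vixOmega$ (which have no plateau excursions), where convex combinations of strictly increasing functions with matching endpoint values remain strictly increasing, so the deformation never leaves $\vixOmega$.
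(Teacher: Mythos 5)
Your overall idea---perturb the $\R$--coordinate to force strict monotonicity---is the right one, but the excursion-by-excursion implementation has two genuine gaps. First, the continuity problem you flag at the end is not a technicality that can be deferred: the decomposition of $\alpha^{-1}(\vSigma X\setminus\{*\})$ into maximal open intervals, and the one-sided limits $g(a^+)$, $g(b^-)$, are badly discontinuous functionals of $\alpha$ (an arbitrarily small perturbation merges or splits excursions), and for the completed structure $\vP_c$ a loop may have infinitely many excursions accumulating at a point (the ``peacock feather eye'' from the introduction), so there need not even be a well-ordered list of excursions with ``central subintervals'' to modify. The suggested repair via ``integrals weighted by the local modulus of monotonicity'' is a gesture, not a construction; without it $H$ is not defined as a continuous map on $\vxOmega$. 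Second, the plateau fix is internally inconsistent: if you add a non-decreasing amount $s\,\psi_{(a,b)}(t)$ that exceeds $1-c$ on a central subinterval, the shifted height remains $\geq 1$ on the whole remainder of $(a,b)$, so at $s=1$ you do not obtain ``two strictly increasing sub-excursions separated by a new visit to $*$''---you collapse the entire tail of the excursion to $*$. Undoing this with an affine map $\l t,x\r\mapsto\l\lambda t+\mu,x\r$ on the second half is only meaningful once that half actually passes through $*$, i.e.\ at $s=1$, so the homotopy would jump discontinuously in $s$; translatability provides global affine rescalings of the suspension coordinate, not a way to re-glue part of an excursion at a different height continuously in $s$.

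The paper's proof avoids all of this with a single global formula: for fixed $0<\varepsilon<1$ it sets $R(\alpha)(t)=\l(h(\alpha(t))+\varepsilon\bar t\,)(1-\varepsilon)^{-1},p(\alpha(t))\r$, where $\bar t$ is the normalized parameter of $\alpha$, extended by $*$ where $\alpha=*$. Since $\bar t$ is strictly increasing and $h\circ\alpha$ is non-decreasing on each excursion, the new height is strictly increasing wherever defined---plateaus are handled automatically, with no case distinction---and the factor $(1-\varepsilon)^{-1}$ ensures the formula tends to $*$ exactly where $\alpha$ does, so $R(\alpha)$ is continuous and $R$ depends continuously on $\alpha$ because it is given by one pointwise expression with no reference to the excursion decomposition. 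Letting $\varepsilon$ run from $0$ to its fixed value supplies both required homotopies. I recommend replacing the piecewise construction by a global perturbation of this kind.
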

\begin{proof}
	For every $0<\varepsilon<1$, the map $R:\vxOmega\to \vixOmega$
	\[
		R(\alpha)(t) = \l (h(\alpha(t))+\varepsilon\bar t\,)(1-\varepsilon)^{-1}   , p(\alpha(t)) \r \,,
	\]
	where $$\bar t=\tfrac{t-t^\alpha_0}{t^\alpha_\infty-t^\alpha_0}$$ is the normalized parameter of $\alpha$, is a homotopy inverse.

\obrazek{rys1}{5cm}
$$h(\alpha(t))\quad\quad\quad
h(\alpha(t))+\varepsilon\bar t\quad\quad\quad
\tfrac{ h(\alpha(t))+\varepsilon\bar t}{1-\varepsilon}$$
\begin{center}Time $\times\;\;\R$--coordinate of $R(\alpha)$\end{center}
\end{proof}

\subsection{Topological James equivalence}

\begin{df}
	For an open neighborhood $U$ of $x_0$, and a sequence $(U_i)_{i=1}^k$ of open subsets of $X$ such that $U_i\cap U=\emptyset$, define the following subset of $J(X)$
	\[
		\fU((U_i),U)=\left\lbrace(x_i)_{i=1}^n:\; \exists_{1\leq m_1<\dots< m_k\leq n}\;(\forall_{j\in\{1,\dots,k\}}\; x_{m_j}\in U_j \;\wedge\; \forall_{i\not\in\{m_1,\dots,m_k\}}\; x_i\in U)  \right\rbrace
	\]
	Let $J_c(X)$ denote the set $J(X)$ with the topology generated by the sets $\fU((U_i),U)$.
\end{df}

\begin{rem}
	The identity map $J(X)\to J_c(X)$ is continuous but its inverse is not in general. Nevertheless, it induces a weak homotopy equivalence since it is homeomorphism on each compact subset.
\end{rem}

Let $\Omega_{\text{min}}:=\vOmega_{\text{min,inc}}(\vSigma X)$ be the space of directed strictly increasing Moore paths on $\vSigma{X}$ that consist of finitely many segments. For every path $\omega\in\Omega_{\text{min}}$, the inverse image
\begin{equation}
	\omega^{-1}(X\setminus\{x_0\}\times \{0\})=\{x^\omega_1<\dots<x^\omega_{k(\omega)}\}
\end{equation}
is a finite set.

\begin{prp}
	The map
 \[\sec:\Omega_{\text{\rm min}}\to J_c(X)\]
 \[ \omega\mapsto (x^\omega_1,\dots,x^\omega_{k(\omega)}) \]
	is continuous.
\end{prp}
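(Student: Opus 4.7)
To verify continuity of $\sec$ at a fixed $\omega\in\Omega_{\mathrm{min}}$, I would take an arbitrary basic open set $\fU((U_j)_{j=1}^k,U)\ni\sec(\omega)=(x_1,\dots,x_n)$ with witness indices $m_1<\dots<m_k$ (so $x_{m_j}\in U_j$ and $x_i\in U$ for $i\notin\{m_1,\dots,m_k\}$) and build an open neighborhood $W\ni\omega$ in the Moore-path topology with $\sec(W)\subseteq\fU((U_j)_{j=1}^k,U)$. The key structural observation is that strict increasingness makes every crossing transversal: on each connected component of $[0,t_\infty^\omega]\setminus\omega^{-1}(*)$ the function $h\circ\omega$ is strictly monotone and so has at most one zero in $(-1,1)$, and the finite-segments hypothesis yields only finitely many crossings $t_1<\dots<t_n$.

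For each $l\in\{1,\dots,n\}$ I would pick an open neighborhood $V'_l\subseteq X\setminus\{x_0\}$ of $x_l$ contained in $U_j$ when $l=m_j$, and in $U\setminus\{x_0\}$ when $l\notin\{m_j\}$; set $V_l$ to be the image in $\vSigma X$ of the saturated open set $(-1,1)\times V'_l$, an open subset of $\vSigma X\setminus\{*\}$. Choose pairwise disjoint closed intervals $I_l=[a_l,b_l]\ni t_l$, each contained in a single component of $[0,t_\infty^\omega]\setminus\omega^{-1}(*)$, with $\omega(I_l)\subseteq V_l$ and $h(\omega(a_l))<0<h(\omega(b_l))$ (possible by strict monotonicity). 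Around $\omega^{-1}(*)$ pick a closed neighborhood $K\subseteq[0,t_\infty^\omega]$, disjoint from the $I_l$, with $\omega(K)\subseteq N$, where $N\subseteq\vSigma X$ is the image of the saturated open set $\{(s,y):|s|>1-\delta\}\cup(\R\times U)$; by construction $N\cap(X\times\{0\})\subseteq\{\l 0,y\r:y\in U\}$. On each closed interval complementary to $\bigcup I_l\cup K$, $\omega$ stays in one of the open half-cones $\{\l s,y\r:s>0,\,y\neq x_0\}$ or $\{\l s,y\r:s<0,\,y\neq x_0\}$, both disjoint from $X\times\{0\}$. Imposing the resulting finite family of compact-open constraints (including sign constraints on $h\circ\omega'$ at the $a_l$, $b_l$) together with $|t_\infty^{\omega'}-t_\infty^\omega|<\varepsilon$ defines $W$.

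For $\omega'\in W$: on each $I_{m_j}$ the image $\omega'(I_{m_j})\subseteq V_{m_j}$ avoids $*$, so $h\circ\omega'$ is strictly monotone on $I_{m_j}$; since $h(\omega'(a_{m_j}))<0<h(\omega'(b_{m_j}))$ by our constraints, $\omega'$ has exactly one zero in $I_{m_j}$, with $X$-coordinate in $V'_{m_j}\subseteq U_j$. Each unimportant $I_i$ similarly contributes a single crossing with $X$-coordinate in $U$. Any crossing of $\omega'$ inside $K$ lies in $N\cap(X\times\{0\})\subseteq\{\l 0,y\r:y\in U\}$, hence has $X$-coordinate in $U$. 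Outside $\bigcup I_l\cup K$, $\omega'$ stays in the open half-cones and produces no crossings. Reading off crossings in temporal order, $\sec(\omega')$ consists of $k$ distinguished entries (one per $I_{m_j}$) in $U_1,\dots,U_k$ in the correct order, interleaved with arbitrarily many entries in $U$; hence $\sec(\omega')\in\fU((U_j)_{j=1}^k,U)$.

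The main delicate point is that $\omega^{-1}(*)$ need not be a finite set of points, and a nearby $\omega'$ can refine it in unpredictable ways, producing many new $*$-hits and correspondingly many new crossings of $X\times\{0\}$. What saves the argument is precisely the loose topology on $J_c(X)$, which tolerates arbitrarily many insertions near the basepoint, combined with the choice of $N$, which confines all such extra crossings to have $X$-coordinate in $U$.
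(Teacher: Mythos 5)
Your argument is correct and follows essentially the same route as the paper's proof: bracket each distinguished crossing $t_{m_j}$ by a short interval on which the path is confined to the open cylinder over $U_j$ with $h$ passing from negative to positive (so strict increasingness forces exactly one crossing there, landing in $U_j$), and impose open conditions elsewhere that confine all remaining crossings to $U$. The paper is slightly more economical---it isolates only the $k$ distinguished crossings and handles everything else (unimportant crossings, the preimage of $*$, and the tail beyond $t_\infty^\omega$) with the single compact--open condition that the path avoid $\{\l 0,x\r : x\notin U\}$ on the complement of those intervals in all of $\R_+$, which also closes the small gap in your version where the stretch $(t_\infty^\omega, t_\infty^{\omega'}]$ is left unconstrained.
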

\begin{proof}
	Let $(U_i)_{i=1}^k$, $U\ni x_0$ be open subsets of $X$ such that $U_i\cap U$ for all $i$. Fix $\omega\in \Omega_{\text{min}}$ such that $\sec(\omega)\in \fU((U_i),U)$. Then there exists a sequence $t_1<\dots<t_k$ such that $\omega(t_i)\in U_i\times \{0\}$ and $\omega(\R_+\setminus\{t_1,\dots,t_k\})\cap (X\setminus\{x_0\})\times \{0\}=\emptyset$. Furthermore, there exist $t^a_i<t_i<t^b_i\in \R_+$ such that $h(\omega(t))<0$ for $t\in\left[t^a_i,t_i\right[$, $h(\omega(t))>0$ for $t\in\left]t_i,t^b_i\right]$ and $p(\omega(t))\in U_i$ for $i\in [t^a_i,t^b_i]$. Let $\fF(K,V)\subseteq \Omega_{\text{min}}$ denote the space of paths $\alpha$ such that $\alpha(K)\subseteq V$.	
	The set
	\begin{multline*}
		\fF\left(\R_+\setminus \bigcup_{i=1}^k (t^a_i,t^b_i), \{\l s,x\r:\; \text{$s\neq 0$ or $x\in U$}\} \right)
		\cap \bigcap_{i=1}^k \fF(\{t^a_i\},\{\l s,x \r:\; -1<s<0 \} )\cap\\
		 \fF(\{t^b_i\},\{\l s,x \r:\; 0<s<1 \}) \cap
		  \fF([t^a_i,t^b_i],\{\l s,x \r:\; 0<s<1,\; x\in U_i \})
	\end{multline*}
	is an open neighborhood of $\omega$ contained in $\sec^{-1}(\fU((U_i),U))$.
\end{proof}

It is clear from the construction that for
$(x_1,x_2,\dots,x_n)\in X^n\subset J(X')$ we have
$$\sec(J(\beta'(x_1,x_2,\dots,x_n))=[(x_1,x_2,\dots,x_n)]\,,$$
i.e.
$$\sec(J(\beta'(x_1,x_2,\dots,x_n))=J(r)(x_1,x_2,\dots,x_n)\,,$$
where $J(r)$ is the map induced by the retraction $r:X'\to X$.
With the assumption that $\vxOmega$ is connected, it follows from Theorem \ref{homomain} that $\sec$ is a homotopy equivalence.

Below we present a construction of a retraction of $\Omega_{\text{min}}$ to a subspace consisting of the paths which are, up to a reparametrization, in the image of $J(\beta')$.

\begin{exa}
	If $X$ is a discrete d-space, then every directed loop $\omega\in \Omega_{\text{\rm min}}$ is a non-decreasing reparametrization of a concatenation
\[
	\beta_{x_1}*\dots*\beta_{x_n}.
\]	
	
	As a consequence, the map $\sec$ induces a continuous bijection between the space of traces (cf.~\cite[Section 2]{R})
\[\vec{T}(\vec\Sigma X_\delta)_*^*:=\vOmega\vSigma X_\delta / \{\text{non-decreasing reparametrizations}\}\]
 and $J_c(X)$.
\end{exa}

\section{Straightening a path}
\label{straightening}

\subsection{A single segment}
We show a construction which allows to construct a deformation of an arbitrary path to the concatenation of paths of the form $\beta(x)$.

Let $\alpha:[0,a]\to C_-\subset\vSigma X$ be a directed increasing path such that
$$\alpha(0)=*\quad\text{and}\quad
\alpha(a)\in \{0\}\times X\,.$$
In order to construct a deformation, we introduce the map
$$\psi_t^-:C_-\to C_-$$
$$(s,x)\mapsto (s-t,x)\,.$$
Now let us define the homotopy
$$ \widetilde H_t^-(\alpha)=(\psi^-_t\circ\alpha) *\beta^{-t,0}_{p(\alpha(a))}(-/a)\, ,$$
i.e.,
$$ \widetilde H_t^-(\alpha)(s)=\begin{cases}\psi^-_t(\alpha(s))&\text{for } s\in[0,a]\\
\l \tfrac{s-a}a-t,p(\alpha(a)) \r &\text{for } s\in[a,a(1+t)].\end{cases}$$

The path $\widetilde H_t^-(\alpha)$ is of length $(1+t)a$. We rescale the parameter linearly in order to obtain a path of length $a$.
The resulting deformation has the property
$$H_0^-(\alpha)=\alpha\,,$$
and $H_1^-(\alpha)(s)$ is a concatenation of a constant path with $\beta^{-1,0}_{p\alpha(a)}(s/2a)$.

Similarly, we deform paths $\alpha:[0,a]\to C_+\subset\vSigma X$ such that
$$\alpha(0)\in \{0\}\times X\quad\text{and}\quad\alpha(a)=*$$
by the formula
$$\widetilde H_t^+(\alpha)=\beta^{0,t}_{p(\alpha(0))}(-/a)*(\psi_{t}^+\circ\alpha)\,,$$
where $\psi_t^+(s,x)=(s+t,x)$. That is
$$ \widetilde H_t^+(\alpha)(s)=\begin{cases}
\l \tfrac{s}a,p(\alpha(0)) \r &\text{for } s\in[0,at]\\
\psi^+_t(\alpha(s-at))&\text{for } s\in[at,a(1+t)].\\\end{cases}$$
We reparametrize $\widetilde H^+_t(\alpha)$ to have a path of length $a$.
Finally, we construct a deformation of a directed increasing path satisfying
\begin{align}\label{pojedyncza1}&\alpha(0)=\alpha(a)=*\,,\\
\label{pojedyncza2}&\alpha(s)\ne*~\text{ for }~s\in(0,a)\,.\end{align}
Such path has a unique intersection with the set $(\{0\}\times X)\setminus \{*\}$. Let $b\in(0,a)$ be the parameter for which $\alpha(b)\in \{0\}\times X$. Then $\alpha$ can be written as the concatenation
$$\alpha=\alpha^-*\alpha^+,$$
with $\alpha^-$ of length $b$.
We define a deformation
$$ \hat H_t(\alpha)=H^-_t(\alpha^-)*H^+_t(\alpha^+)\,.$$ The line coordinate of $\hat H_1(\alpha)$ is piecewise linear (increasing linearly on $[b/2,b]$ and on $[b,(b+a)/2)$ and constant elsewhere). The path  $ H_1(\alpha)$ can be deformed further (in an uniform way) to obtain a path
$$\beta^{-1,1}_{p(\alpha(b))}(s/a)\,.$$
We have obtained a canonical deformation
$$\alpha(s)\;\leadsto\;\beta^{-1,1}_{p(\alpha(b))}(s/a)\,.$$

\obrazek{rys3}{5cm}
\begin{center}$X\;\times\; \R$\end{center}

\noindent We omit the proof that this procedure is continuous with respect to $\alpha$. The rigorous proof is quite tedious and we leave it to the reader since we do not need it in presence of Theorem \ref{homomain} and Corollary \ref{cormain}.

\subsection{Deformation of a chain}
Every path $\alpha\in \Omega_{\text{min}}$ is a concatenation of a chain
$$const_0*\alpha_1*const_1*\alpha_2*const_2*\dots*\alpha_n*const_n\,,$$
where the paths $\alpha_i$ satisfy (\ref{pojedyncza1}--\ref{pojedyncza2}) and $const_i$ are constant paths. These constant paths can be deformed to paths of length zero in a canonical way and the paths $\alpha_i$ are deformed using the homotopy defined in the precious subsection. The resulting retraction $\alpha\mapsto H_1(\alpha)$ leaves unchanged the paths which are the images $J(\beta')(x_1,x_2,\dots,x_n)$.

\section{Concluding remarks}
Let us summarize the results of the homological and geometric constructions.
\begin{itemize}
\item For $X$ connected, according to Corollary \ref{cormain}, the map
\[
	\beta'\circ J(r)^{-1}:J(X)\to \vOmega\Sigma X
\]
is a weak homotopy equivalence under suitable assumptions on the directed structure, including the minimal or completed structure for cubical complexes.

\item We have constructed a map $\sec:\Omega_{\text{min}}\to JX$ such that 
\[
	\sec\circ\beta'\circ \iota:\bigsqcup_{n>0}X^n\to J(X)
\]
is equal to the natural projection. 
Here $\Omega_{\text{min}}$ is a subset of $\vxOmega$, which is homotopy equivalent.
It follows that the map $\sec$ is essentially a homotopy inverse of $\beta'$. 

\item In addition, we show how to deform an individual path to a path which is in the image of $\beta'$. The deformation is done in a canonical way.
\end{itemize}
We hope that our results will add a new flavour to the old classical James construction.


\end{document}